\newtheorem{theorem}{Theorem}[section]
\newtheorem{corollary}[theorem]{Corollary}
\newtheorem{lemma}[theorem]{Lemma}
\newtheorem{remark}[theorem]{Remark}
\newtheorem{definition}{Definition}
\newtheorem{assumption}{Assumption}
\newcommand{\ma}{\mathsf{S}}
\newcommand{\N}{\mathbb{N}}
\newcommand{\J}{\mathcal{J}}
\newcommand{\R}{\mathbb{R}}
\newcommand{\E}{\mathcal{E}}
\newcommand{\DT}{\Delta \theta}
\newcommand{\DP}{\Delta \psi}
\newcommand{\Om}{\mathsf{\Omega}}
\newcommand{\ovb}{\frac{1}{\beta}}
\newcommand{\dt}[1]{\frac{\diff #1}{\diff t}}
\newcommand{\tildet}{\widetilde\theta}
\newcommand{\tildep}{\widetilde\psi}
\newcommand*\diff{\mathop{}\!\mathrm{d}}
\newcommand{\lmin}{l_{\mathrm{min}}}
\newcommand{\lmax}{l_{\mathrm{max}}}
\title{Inertial Newton Algorithms Avoiding Strict Saddle Points}
\author{\textbf{Camille Castera}$^\ast$\\
	CNRS - IRIT\\
	Universit\'e de Toulouse\\
	France
}
\begin{document}

	\maketitle

	\begin{abstract}
		We study the asymptotic behavior of second-order algorithms mixing Newton's method and inertial gradient descent in non-convex landscapes. We show that, despite the Newtonian behavior of these methods, they almost always escape strict saddle points. We also evidence the role played by the hyper-parameters of these methods in their qualitative behavior near critical points. The theoretical results are supported by numerical illustrations.
	\end{abstract}

	%%% Fake footnote for corresponding author %%%
	\renewcommand*{\thefootnote}{$^\ast$}
	\footnotetext[1]{Corresponding author: \texttt{camille.castera@protonmail.com}\\ \textit{Published in Journal of Optimization Theory and Applications 199(12):881--903}}
	\renewcommand*{\thefootnote}{\arabic{footnote}}
	\setcounter{footnote}{0} %reset counter
	%%%%%

%%%%%%%%%%%%%%%%%%%%%%%%%%%%%%%%%%%%%%%%%%%%%%%%%%%ùù

	\section{Introduction}
	Designing algorithms for large-scale optimization remains a major challenge and is crucial for modern machine learning problems. Many of these problems amount to the unconstrained minimization of a so-called \textit{loss function}:
	\begin{equation}\label{eq::genproblem}
		\min_{\theta\in\R^P} \J(\theta).
	\end{equation}
	Lots of efforts are put into building algorithms exploiting second-order derivatives of $\J$ while maintaining low storage and computational costs.
	To this aim, a popular line of work consists in deriving algorithms from ordinary differential equations (ODEs), such as the following ODE \citep{alvarez2002second}:
		\begin{equation}\label{eq::2ndorderDIN}
			\frac{\diff^2{\theta}}{\diff t^2}(t) + \alpha\frac{\diff{\theta}}{\diff t}(t) + \beta \nabla^2\J(\theta(t))\frac{\diff{\theta}}{\diff t}(t) + \nabla\J(\theta(t))=0, \quad \text{for all } t> 0,
		\end{equation}
		where $\nabla\J$ and $\nabla^2\J$ denote the gradient and the Hessian of $\J$ respectively, $\alpha$ and $\beta$ are non-negative fixed parameters, and the rest of the setting is made precise later. Building algorithms from \eqref{eq::2ndorderDIN} is relevant for tackling \eqref{eq::genproblem}, indeed, if a solution of \eqref{eq::2ndorderDIN} converges, its limit is a critical point of $\J$ \cite{alvarez2002second}.
		This ODE is called DIN for \textit{dynamical inertial Newton-like system}, and echos famous optimization algorithms: taking $\beta=0$, \eqref{eq::2ndorderDIN} boils down to the heavy-ball with friction (HBF) ODE \citep{polyak1964some} and can be extended to connect it to Nesterov's method \citep{nesterov1983method,su2014differential,attouch2016fast},
		while when taking $\alpha=0$, \eqref{eq::2ndorderDIN} represents an inertial Newton method \citep{attouch2001second}.
		The term involving $\nabla^2\J$ in DIN provides stability and reduced oscillations to its solutions compared to HBF or Nesterov's method, which allows fast vanishing of the gradient \cite{attouch2016fast}.
		DIN is thus at the interface between first and second order optimization, yet unlike most second-order dynamics, it has the notable property to possess an equivalent form where $\nabla^2\J$ does not appear explicitly, see \eqref{eq::DIN} below. By discretizing this formulation, \citet{castera2019inertial} recently built an algorithm, called INNA. This algorithm has the same cost as first-order algorithms (only requires evaluations of $\nabla\J$), and is naturally extendable to stochastic and non-smooth settings because it does not rely on ``Hessian-vector'' products, unlike most cheap second-order methods.

		While being of second-order type, DIN-like dynamics and INNA do not yield faster rates of convergence (in values) for convex function than the optimal first-order methods (see \textit{e.g.}, \cite{attouch2016fast}). Yet INNA revealed to perform well in practical problems \textit{e.g.}, for training neural networks, and featured good ``generalization performances'', see \cite{castera2019inertial}. Therefore, INNA has mostly proved to be useful to minimize\footnote{The limit of sub-sequences of iterates of INNA yield critical points of $\J$, both for vanishing step-sizes \citep{castera2019inertial}, and fixed ones if $\J$ has Lipschitz continuous gradient (see Theorem~\ref{thm::convergenceofINNA}).} non-convex functions (as is the case in deep learning). Yet non-convex functions may possess spurious critical points (\textit{i.e.}, that are not minima). This is problematic because, while first-order methods are likely to avoid \textit{strict} saddles (critical points where $\nabla^2\J$ has a negative eigenvalue, \cite{goudou2009gradient,lee2016gradient,o2019behavior}), vanilla Newton's method is  attracted to any type of critical points (see \textit{e.g.}, \cite{dauphin2014}). Since INNA and DIN are at the interface between first and second order methods, it remains open to know \textit{whether the solutions of DIN and INNA are likely to avoid strict saddle points?}
		The main contribution of this paper is to answer positively to this question both for DIN and INNA with fixed step-sizes, for any choice of parameters $\alpha>0$ and $\beta>0$. We also shed light on the influence of $\alpha$ and $\beta$ on the asymptotic behavior of the solutions of DIN, this gives new insights into the role played by these parameters.

	To summarize, our main contributions are the following:
	\begin{itemize}
		\setlength\itemsep{0.1em}
		\item[--] Proving that the solutions of DIN almost always avoid strict saddle points.
		\item[--] Studying the convergence of INNA with fixed step-sizes and prove again almost sure avoidance of strict saddles.
		\item[--] Showing new results on the qualitative behavior of the solutions of DIN and on the role played by the hyper-parameters $\alpha$ and $\beta$.
	\end{itemize}

	\paragraph{Related work.}
	ODEs (or dynamical systems) are a powerful tool to design first-order methods  \cite{polyak1964some} and to provide new understanding of existing algorithms  \citep{su2014differential,shi2018understanding}. They allow deriving rates of convergence both for convex \citep{attouch2018fast,vassilis2018differential,attouch2019rate,aujol2019optimal} and non-convex\footnote{This requires more assumptions on $\J$, \textit{e.g.}, the Kurdyka-{\L}ojasiewicz property, see  \citep{laszlo2021convergence}.} \citep{laszlo2021convergence} functions. First-order algorithms can even be built from dynamical systems in non-convex and non-smooth settings \citep{boct2016inertial,ochs2018local,alecsa2020gradient}.

	The DIN \cite{alvarez2002second} ODE was studied by many, among which \cite{attouch2014dynamical,attouch2016fast} who considered extensions of DIN where the hyper-parameters $\alpha$ and $\beta$ vary over time; connections between the Nesterov's method and DIN-like ODEs \citep{shi2018understanding,alecsa2021extension} were made;
	generalizations and extensions have been considered, including Tikonov regularization \citep{boct2021tikhonov}, closed-loop dampings \citep{attouch2022fast}, and non-smooth \citep{attouch2020newton,attouch2021continuous} and non-convex settings \citep{castera2019inertial}.
	The first-order equivalent formulation of DIN was exploited to design several algorithms \cite{chen2019first,attouch2019first}, including INNA \citep{castera2019inertial}.
	The influence of the parameters $\alpha$ and $\beta$ on rates of convergence  was studied in the convex and strongly-convex settings by \cite{attouch2019first}. We consider non-convex functions and rather provide qualitative properties (such as a spiraling phenomenon in Section~\ref{sec::hartman}). This analysis relies on the Hartman-Grobman theorem \citep{grobman1959homeomorphism,hartman1960lemma}.

	Our main results rely on the theory of dynamical systems, and in particular on the stable manifold theorem \citep{pliss1964reduction,kelley1966stable}. It allows showing that algorithms are likely to avoid strict saddle points, and was used first for gradient descent (GD) and HBF \cite{goudou2009gradient,lee2016gradient,o2019behavior} on functions with isolated critical points. Results have then been extended to non-isolated critical points \citep{panageas2017}, to GD with non-constant step-sizes \citep{panageas2019first,truong2019convergence} and to SGD \citep{mertikopoulos2020almost}.
	Finally, the convergence of INNA was proved for vanishing step-sizes \cite{castera2019inertial}, we prove it for fixed step-sizes.

	\paragraph{Organization.} We specify the setting in Section~\ref{sec::preliminary}. Section~\ref{sec::DIN} states the results for DIN, in particular the avoidance of strict saddles (Section~\ref{sec::stabmanif}), a qualitative study of DIN is carried out in Section~\ref{sec::hartman}. The reader only interested in results for INNA may go to Section~\ref{sec::INNA} where convergence and avoidance of saddles are proved and experiments are presented. Conclusions are finally drawn.

	\section{Preliminary Discussions and Definitions}\label{sec::preliminary}
	In the rest of the paper, we fix $P\in\N_{>0}$, the dimension of the problem \eqref{eq::genproblem}, and equip $\R^P$ with norm $\Vert\cdot\Vert$ and scalar product $\langle\cdot,\cdot\rangle$. We consider a $C^2$ lower-bounded loss function $\J\colon\R^P\to\R$ and denote by $\nabla\J$ and $\nabla^2\J$ its gradient and Hessian, respectively. We say that $\theta:\R_+\to\R^P$ is a solution or trajectory of \eqref{eq::2ndorderDIN} if it is $C^2(\R_+)$ and solves \eqref{eq::2ndorderDIN} for all $t>0$. We say that $\theta$ converges if $\lim_{t\to+\infty}\theta(t)$ exists. Finally, we fix two constants $\alpha\geq 0$ and $\beta>0$.

	We recall optimality conditions, see \cite{nocedal2006numerical}:
	if $\theta^\star$ is a minimizer of $\J$, then
		$\theta^\star$ is a critical point of $\J$ (\textit{i.e.}\ $\nabla \J(\theta^\star)=0$) and $\nabla^2 \J(\theta^\star)$ is positive semidefinite (its eigenvalues are \emph{non-negative}).
		We distinguish three types of critical points $\theta^\star\in\R^P$, depending on the eigenvalues of $\nabla^2\J(\theta^\star)$. If $\nabla^2\J(\theta^\star)$ has:
		\\
		--\ only positive eigenvalues, then $\theta^\star$ is a (local) minimizer;
		\\
		--\  at least one negative eigenvalue, $\theta^\star$ is called \textit{strict saddle point}. It cannot be a minimizer, but may (not necessarily) be a maximizer;
		\\
		--\ only non-negative eigenvalues and at least one zero eigenvalue, $\theta^\star$ is a \textit{non-strict} saddle point. It may be maximizer, a minimizer, or neither of them, see the example in Figure~\ref{fig::saddles}.

	Due to the difficulties raised by the existence of non-strict saddle points, we will sometimes consider Morse functions\footnote{The main algorithmic result of this paper, Theorem~\ref{thm::MainResINNA} holds beyond Morse functions.}, defined next.
	\begin{definition}
		We say that  $\J$ is a \textit{Morse function} if $\nabla^2 \J(\theta^\star)$ has no zero eigenvalues at critical points $\theta^\star\in\R^P$.
	\end{definition}
	Critical points of Morse functions may only be strict saddles or minima and they are isolated: they are the only critical point in a neighborhood. This is because $\nabla\J$ cannot be constant around $\theta^\star\in\R^P$ if $\nabla^2\J(\theta^\star)$ has only non-zero eigenvalues, see \cite[Corollary 2.3]{milnor2016morse}. We now move on to the analysis of DIN.
	\begin{figure}[t]
		\begin{minipage}{0.49\linewidth}
			\centering
			\includegraphics[width=.7\linewidth]{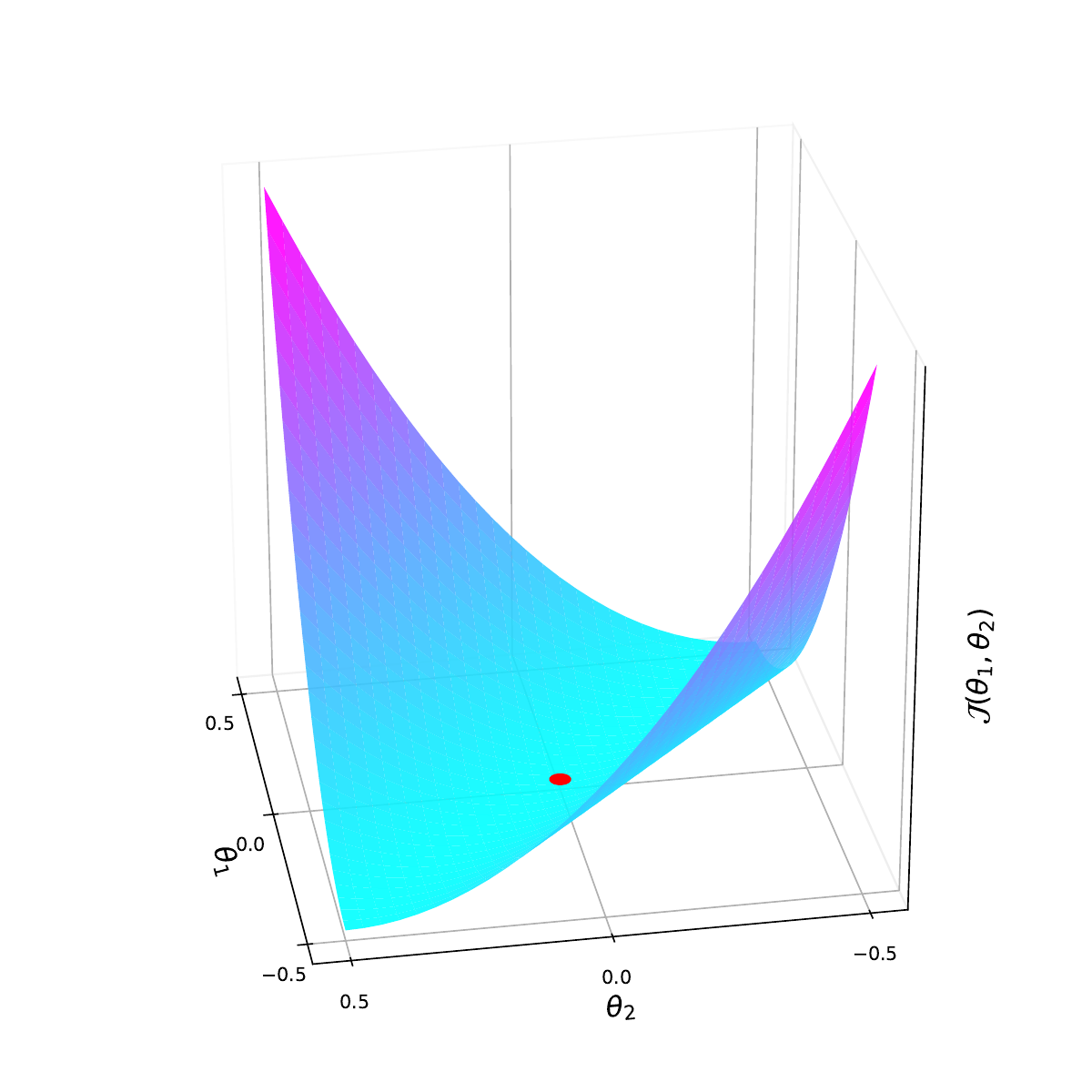}
		\end{minipage}
		\begin{minipage}{0.49\linewidth}
			\centering
			\includegraphics[width=.7\linewidth]{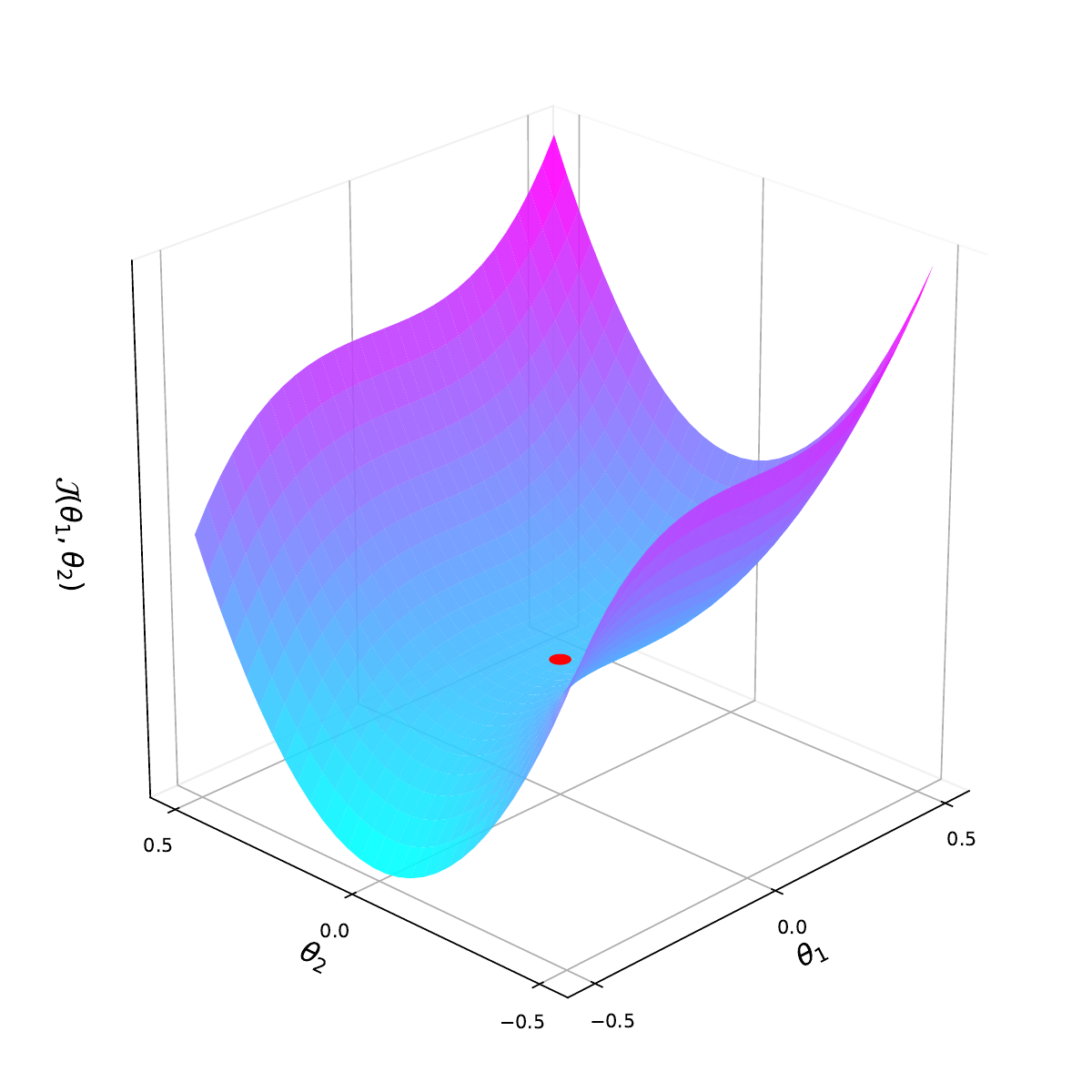}
		\end{minipage}
		\caption{Example of two functions with non-strict saddle. On the left figure, $(0,0)$ is a minimum, but on the right, the critical point  $(0,0)$ is neither a minimum nor a maximum. \label{fig::saddles}}
	\end{figure}
	\section{Asymptotic Behavior of the Solutions of DIN}\label{sec::DIN}
	As mentioned in the introduction, a powerful property of \eqref{eq::2ndorderDIN} is that it is equivalent to the following first-order system \citep{alvarez2002second}:
	\begin{equation}\label{eq::DIN}
		\begin{cases}
			\dt{\theta}(t)&= -(\alpha-\ovb)\theta(t) - \ovb \psi(t) -\beta\nabla\J(\theta(t))   \\
			\dt{\psi}(t)&= -(\alpha-\ovb)\theta(t) - \ovb \psi(t)
		\end{cases}, \quad \text{for all } t> 0,
	\end{equation}
	where $(\theta,\psi)\colon\R_+\times\R_+\to\R^P\times\R^P$.
	Since $\J$ is $C^2(\R^P)$, the existence and uniqueness (with respect to initial conditions) of  solutions of \eqref{eq::DIN} follow from the Cauchy-Lipschitz theorem \citep{alvarez2002second}. We now study stationary points of \eqref{eq::DIN}.

	\paragraph{Characterization of stationary points.}
	We say that $(\theta^\star,\psi^\star)\in\R^P\times\R^P$ is a \emph{stationary point} for \eqref{eq::DIN} if for any solution $(\theta,\psi)$ of \eqref{eq::DIN}, whenever there exists $t_0\geq0$ such that $(\theta(t_0),\psi(t_0)) = (\theta^\star,\psi^\star)$, then $\forall t\geq t_0$, $(\theta(t),\psi(t)) = (\theta^\star,\psi^\star)$, or equivalently, $\forall t\geq t_0$, $\dt\theta(t)=\dt\psi(t) = 0$. From \eqref{eq::DIN}, stationary points are then those such that $\nabla\J(\theta^\star)=0$, and $\psi^\star = (1-\alpha\beta)\theta^\star$, that is, the set
	$
	\ma  = \left\{(\theta^\star,\psi^\star)\in\R^P\times\R^P\middle| \nabla\J(\theta^\star)=0,\ \psi^\star=(1-\alpha\beta)\theta^\star\right\}.
	$
	Remark that there is a one-to-one correspondence between stationary points of \eqref{eq::DIN} and critical points of $\J$, so as previously mentioned, the limit (if it exists) of a solution $(\theta,\psi)$ of \eqref{eq::DIN} is in $\ma$ \citep{alvarez2002second}, meaning that $\theta$ converges to a critical point.

	Existence of a limit is not always guaranteed for the general class of smooth non-convex functions, but still holds for broad classes of problems. In particular, when $\J$ is analytic or semi-algebraic \cite{alvarez2002second}, or more generally when it possesses the Kurdyka-{\L}ojasiewicz\footnote{See \textit{e.g.}, \cite{alvarez2002second,castera2019inertial} for precise definitions. These notions are not crucial in what follows.} (KL) property, then bounded solutions of \eqref{eq::DIN} converge to $\ma$ \cite[Theorem 13]{castera2019inertial}.
	In the sequel, we will study more specifically what type of stationary points the solutions of \eqref{eq::DIN} are likely to converge to, and then study the qualitative asymptotic behavior of these solutions.

	\subsection{DIN is Likely to Avoid Strict Saddle Points}\label{sec::stabmanif}
	We start with our main result regarding the limit of the solutions of DIN.
	\subsubsection{Main Convergence Results}
	We define the set of stationary points $(\theta^\star,\psi^\star)$ where  $\theta^\star$ is a strict saddle of $\J$:
	\begin{equation*}
		\ma_{<0} = \left\{(\theta^\star,\psi^\star)\in\ma \,\middle|  \nabla^2\J(\theta^\star)\text{ has at least one negative eigenvalue}\right\}.
	\end{equation*}
	\begin{theorem}\label{thm::MainResDIN}
		Assume that $\J$ is a Morse function, then for almost any initialization, the corresponding solution of \eqref{eq::DIN} does not converge to a point in $\ma_{<0}$.
	\end{theorem}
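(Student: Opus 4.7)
The plan is to view \eqref{eq::DIN} as an autonomous first-order system $\dot z = F(z)$ on $\R^{2P}$ with $z=(\theta,\psi)$, and to apply the Pliss--Kelley center-stable manifold theorem at each equilibrium $z^\star \in \ma_{<0}$. If $DF(z^\star)$ has at least one eigenvalue with strictly positive real part, then the local center-stable manifold at $z^\star$ (which contains every point whose trajectory stays in a small neighborhood of $z^\star$ for all positive times) has dimension at most $2P-1$ and therefore Lebesgue measure zero in $\R^{2P}$. Since Assumption~\ref{ass::isolated} forces the critical points of $\J$ to be countable, $\ma_{<0}$ is countable too, and the desired conclusion follows once the local statement is globalized via the flow of $F$.

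The core computation is a spectral analysis of $DF(z^\star)$. Differentiating the right-hand side of \eqref{eq::DIN} yields
\[
DF(z^\star) = \begin{pmatrix} -\left(\alpha-\tfrac{1}{\beta}\right)I - \beta\, \nabla^2\J(\theta^\star) & -\tfrac{1}{\beta}I \\[4pt] -\left(\alpha-\tfrac{1}{\beta}\right)I & -\tfrac{1}{\beta}I \end{pmatrix}.
\]
For each eigenpair $(\lambda,v)$ of $\nabla^2\J(\theta^\star)$, I would look for eigenvectors of $DF(z^\star)$ of the block form $(av,bv)$; eliminating $a$ and $b$ reduces the eigenvalue condition to the scalar equation
\[
\mu^2 + (\alpha + \beta\lambda)\mu + \lambda = 0.
\]
Sweeping $\lambda$ across the spectrum of $\nabla^2\J(\theta^\star)$ then recovers all $2P$ eigenvalues of $DF(z^\star)$ with multiplicity. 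When $\lambda<0$, the discriminant $(\alpha+\beta\lambda)^2-4\lambda$ is strictly larger than $(\alpha+\beta\lambda)^2\geq 0$, so both roots are real and one is strictly positive. By the very definition of $\ma_{<0}$, such a negative $\lambda$ exists, so $DF(z^\star)$ possesses the desired unstable eigenvalue.

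It remains to translate the local center-stable manifold into a statement about the set of \emph{initial} conditions whose entire forward trajectory converges to $z^\star$. Because $\J \in C^2$, the field $F$ is $C^1$ and the flow $\Phi_t$ of \eqref{eq::DIN} is a $C^1$-diffeomorphism for every $t\geq 0$. Any trajectory converging to $z^\star$ eventually enters and stays in the neighborhood given by the local manifold theorem, so its initial condition lies in $\bigcup_{n\in\N} \Phi_{-n}(W^{cs}_{\mathrm{loc}}(z^\star))$, a countable union of diffeomorphic images of a null set, hence itself null. Taking one more countable union over $\ma_{<0}$ gives the theorem. I expect this globalization step---passing from the local Pliss--Kelley statement to a statement about full forward trajectories starting from a generic initialization---to be the most delicate part of the argument; the eigenvalue computation, although essential, is mechanical once the block ansatz $(av,bv)$ is guessed.
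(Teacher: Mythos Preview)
Your proposal is correct and follows essentially the same route as the paper: compute the Jacobian of the first-order system at an equilibrium, reduce to the scalar quadratic $\mu^2+(\alpha+\beta\lambda)\mu+\lambda=0$ indexed by the Hessian eigenvalues, show a strictly positive root when $\lambda<0$, invoke the (center-)stable manifold theorem, and globalize by pulling back the local null manifold along the $C^1$ flow and taking a countable union over $\ma_{<0}$. The only cosmetic differences are that the paper block-diagonalizes $DF$ via an explicit orthogonal change of basis and a permutation (rather than your eigenvector ansatz $(av,bv)$), and it verifies $\sigma_{p,+}>0$ by a case split on the sign of $\alpha+\beta\lambda_p$, whereas your observation that the discriminant strictly exceeds $(\alpha+\beta\lambda)^2$ (equivalently, that the product of roots equals $\lambda<0$) gives the same conclusion in one line.
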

	Before proving the theorem, the following corollary is a straightforward consequence suited for practical applications.
	\begin{corollary}\label{cor::dincor}
		Assume that $\J$ is a Morse function, is coercive (\textit{i.e.}, $\Vert\theta\Vert\to+\infty \implies \J(\theta)\to +\infty$), and that bounded solutions of \eqref{eq::DIN} converge.
		If the initialization $(\theta_0,\psi_0)$ is random\footnote{The distribution of $(\theta_0,\psi_0)$ must be absolutely continuous w.r.t. the Lebesgue measure, that is: for any set $\mathsf{I}\subset \R^P\times\R^P$ with zero Lebesgue measure, $\mathbb{P}((\theta_0,\psi_0)\in\mathsf{I})=0$.}, then the corresponding solution $(\theta,\psi)$ of \eqref{eq::DIN} converges and the limit of $\theta$ is almost surely a local minimizer of $\J$.
	\end{corollary}
	\begin{proof}[Proof of Corollary~\ref{cor::dincor}]
		From \cite[Section 3.2]{castera2019inertial}, the coercivity of $\J$ ensures that any solution of \eqref{eq::DIN} remains bounded, so by assumption for any initialization the solution of \eqref{eq::DIN} converges, and its limit is in $\ma$.
		Let $(\theta_0,\psi_0)$ be a random variable with absolutely continuous distribution w.r.t. the Lebesgue measure on $\R^P\times\R^P$.
		According to Theorem~\ref{thm::MainResDIN}, the set $\mathsf{I}_{<0}$ of initializations such that the corresponding  solution $(\theta,\psi)$ of \eqref{eq::DIN} converges to $\ma_{<0}$ has zero measure, so by absolute continuity of the distribution, the probability to sample $(\theta_0,\psi_0)$ from $\mathsf{I}_{<0}$ is zero.
		Thus, with probability one, $(\theta,\psi)$ converges to $\ma \setminus\ma_{<0}$, which contain only minimizers of $\J$ since $\J$ is a Morse function.
	\end{proof}
	%\qed
	Note that the coercivity assumption in Corollary~\ref{cor::dincor} is only used to ensure the boundedness of solutions of \eqref{eq::DIN}.
	We now introduce the main tool to prove Theorem~\ref{thm::MainResDIN}: the stable manifold theorem.

	\subsubsection{The Stable Manifold Theorem}
	To simplify the notations we introduce the following mapping:
	$$G:(\theta,\psi)\in\R^P\times\R^P\mapsto \begin{pmatrix}
		-(\alpha-\ovb)\theta - \ovb \psi -\beta\nabla\J(\theta) \\
		-(\alpha-\ovb)\theta - \ovb \psi
	\end{pmatrix},$$ so that \eqref{eq::DIN} can be rewritten as
	\begin{equation}\label{eq::systemG}
		\frac{\diff}{\diff t}\begin{pmatrix}\theta(t)\\\psi(t)\end{pmatrix} = G(\theta(t),\psi(t)), \quad \text{for all } t> 0.
	\end{equation}
	For any $(\theta,\psi)\in\R^P\times\R^P$, we also denote by $DG(\theta,\psi)\in\R^{2P\times 2P}$ the Jacobian matrix of $G$ at $(\theta,\psi)$.
	Remark that $(\theta,\psi)\in\ma  \iff G(\theta,\psi)=0$, so the stationary points of \eqref{eq::DIN} are exactly the zeros of $G$.
	We now state the stable manifold theorem which is the keystone for proving Theorem~\ref{thm::MainResDIN}.

	\begin{theorem}[{\citep[Section~2.7, Page 107]{perko2013differential}}]\label{thm::stabman}
		Let $F\colon \R^{2P}\to\R^{2P}$ be $C^1$, denote by $DF$ its Jacobian. Consider the autonomous ODE,
		\begin{equation}\label{eq::autonomousODE}
			\dt{\Theta}(t) = F(\Theta(t)), \quad \text{for all } t> 0.
		\end{equation}
		Let $\Theta^\star\in\R^{2P}$ such that $F(\Theta^\star) = 0$ and such that $DF(\Theta^\star)$ has no eigenvalues with zero real part.  Assume that $DF(\Theta^\star)$ has $k\in\N$ eigenvalues with negative real part and let $E^{s}_{\Theta^\star}$ be the linear subspace spanned by these eigenvalues.
		Then, there exists a neighborhood $\Om_{\Theta^\star}$ of $\Theta^\star$ and a $C^1$ manifold $\mathsf{W}^{s}_{\Theta^\star}$ tangent to $E^{s}_{\Theta^\star}$ at $\Theta^\star$, whose dimension is $k$, such that, for any $\Theta_0\in \Om_{\Theta^\star}$ and its corresponding solution $\Theta$ of \eqref{eq::autonomousODE} it holds that:
		\renewcommand{\theenumi}{(\roman{enumi})}%
		\begin{enumerate}
			\item  If $\Theta_0\in \mathsf{W}^{s}_{\Theta^\star}$ then for all $t\geq 0$, $\Theta(t)\in\mathsf{W}^{s}_{\Theta^\star}$. \label{point1}
			\item $\left[\forall t\geq 0,\ \Theta(t)\in \Om_{\Theta^\star}\text{ and } \lim_{t\to+\infty} \Theta(t) = \Theta^\star\right]$ $\iff  \Theta_0\in \mathsf{W}^{s}_{\Theta^\star}$. \label{point2}
		\end{enumerate}%
	\end{theorem}

	\begin{remark}\label{rem::remonstabman}
		In \cite{perko2013differential}, \ref{point2} is stated outside of the statement of the theorem (see Page~114 therein).
	\end{remark}

	The manifold $\mathsf{W}^{s}_{\Theta^\star}$ is called the \emph{stable} manifold (hence the superscript $s$) because Theorem~\ref{thm::stabman} implies that all the solutions of \eqref{eq::autonomousODE} converging to $\Theta^\star$ must stay inside $\mathsf{W}^{s}_{\Theta^\star}\cap\Om_{\Theta^\star}$ after some time.
	We see why the proof of Theorem~\ref{thm::MainResDIN} relies on it. In particular, from Remark~\ref{rem::remonstabman}, if $DF(\Theta^\star)$ has one or more eigenvalues with positive real part, then $\mathsf{W}^{s}_{\Theta^\star}$ has zero measure. We will prove that this holds for $G$ at any point in $\ma_{<0}$.

	\subsubsection{Proof of Theorem~\ref{thm::MainResDIN}}

	\begin{proof}[Proof of Theorem~\ref{thm::MainResDIN}]
		Let $(\theta^\star,\psi^\star)\in\ma_{<0}$, the Jacobian of $G$ at $(\theta^\star,\psi^\star)$ is
		$DG(\theta^\star,\psi^\star) =
			\begin{pmatrix}
				-\beta\nabla^2\J(\theta^\star) - (\alpha-\ovb) I_P && -\ovb I_P\\
				-(\alpha-\ovb) I_P && -\ovb I_P
			\end{pmatrix},
		$ where $I_P$ denotes the identity matrix of $\R^{P\times P}$, and $DG(\theta^\star,\psi^\star)$ is displayed in four blocks.
		In order to apply Theorem~\ref{thm::stabman} to \eqref{eq::systemG}, we want to show that since $\nabla^2\J(\theta^\star)$ has a negative eigenvalue then $DG(\theta^\star,\psi^\star)$ has at least one eigenvalue with positive real part, and thus according to Theorem~\ref{thm::stabman}, the stable manifold associated to  $(\theta^\star,\psi^\star)$ has zero measure.
		First, $\nabla^2\J(\theta^\star)$ is real and symmetric, so there exists an orthogonal matrix $V$ such that $V^T \nabla^2\J(\theta^\star) V$ is diagonal, so
		\begin{equation}\label{eq::3diagMat}
			\begin{pmatrix}
				V^T & 0\\ 0 & V^T
			\end{pmatrix} DG(\theta^\star,\psi^\star) \begin{pmatrix}
				V & 0\\ 0 & V
			\end{pmatrix} =
			\begin{pmatrix}
				-\beta V^T\nabla^2\J(\theta^\star)V - (\alpha-\ovb) I_P && -\ovb I_P\\
				-(\alpha-\ovb) I_P && -\ovb I_P
			\end{pmatrix}
		\end{equation}
		is a matrix with only $3$ non-zero diagonals and whose eigenvalues are the same as those of $DG(\theta^\star,\psi^\star)$. Exploiting the tridiagonal structure, there exists a symmetric permutation $U\in\R^{2P\times 2P}$---specified in \eqref{eq::permutationU} in Appendix~\ref{sec::Permut}---such that we can transform \eqref{eq::3diagMat} into a block diagonal matrix:
		\begin{equation}\label{eq::blockdiagMat}
			U^T\begin{pmatrix}
				V^T & 0\\ 0 & V^T
			\end{pmatrix} DG(\theta^\star,\psi^\star) \begin{pmatrix}
				V & 0\\ 0 & V
			\end{pmatrix}U =
			\begin{pmatrix}
				M_1 & & \\
				&\ddots&\\
				& & M_P
			\end{pmatrix},
		\end{equation}
		where, for each $p\in\{1,\ldots,P\}$, $M_p= \begin{pmatrix}
			-(\alpha-\ovb) - \beta\lambda_p & -\ovb\\
			-(\alpha-\ovb) & -\ovb
		\end{pmatrix}$, up to a symmetric permutation, and where $\lambda_p$ is a corresponding eigenvalue of $\nabla^2\J(\theta^\star)$.

		The eigenvalues of $DG(\theta^\star,\psi^\star)$ are those of the matrices $M_p$ and are given for each $p\in\{1,\ldots,P\}$ by the roots of its characteristic polynomial: $\chi_{M_p}(X)= X^2 - \mathrm{trace}(M_p)X + \mathrm{det}(M_p)$, which yields,
		\begin{equation} \label{eq::chiMp}
			\chi_{M_p}(X) = X^2 + (\alpha + \beta\lambda_p)X + \lambda_p.
		\end{equation}
		This is a second-order polynomial and remark that $\chi_{M_p}(0)=\lambda_p$. Since $\J$ is a Morse function $\lambda_p\neq 0$, therefore $0$ is never a root of $\chi_{M_p}$. Now, either $\lambda_p<0$, then $\chi_{M_p}(0)<0$ and $\lim_{X\to+\infty}\chi_{M_p}(X) = +\infty$ so by continuity of $\chi_{M_p}$ its roots are real and at least one is positive. Or, $\lambda_p>0$, in that case $\alpha+\beta\lambda_p>0$ so there cannot be roots of $\chi_{M_p}$ with zero real part (see also Lemma~\ref{lem::lambdappositive} hereafter). To summarize, for any $(\theta^\star,\psi^\star)\in\ma_{<0}$, $DG(\theta^\star,\psi^\star)$ has no eigenvalues with zero real part and at least one positive eigenvalue, so we can now use Theorem~\ref{thm::stabman}.

		Let $(\theta^\star,\psi^\star)\in\ma_{<0}$, and consider the neighborhood $\Om_{(\theta^\star,\psi^\star)}$ and the manifold $\mathsf{W}^{s}_{(\theta^\star,\psi^\star)}$ containing $(\theta^\star,\psi^\star)$ stated in Theorem~\ref{thm::stabman}.
		Let an initialization $(\theta_0,\psi_0)\in\R^P\times\R^P$ such that the corresponding solution $(\theta,\psi)$ of \eqref{eq::systemG} converges to $(\theta^\star,\psi^\star)$.
		Consider $\Phi:\R^P\times\R^P\times \R\to \R^P\times\R^P$, the flow of the solutions of \eqref{eq::systemG}, so that we have in particular for all $t\geq 0$, $(\theta(t),\psi(t))= \Phi((\theta_0,\psi_0),t)$ and $(\theta_0,\psi_0)= \Phi((\theta(t),\psi(t)),-t)$.
		Since $(\theta,\psi)$ converges to $(\theta^\star,\psi^\star)$, there exists $t_0\geq 0$, such that for all $t\geq t_0$, $\Phi((\theta_0,\psi_0),t)\in\Om_{(\theta^\star,\psi^\star)}$.
		According to \ref{point2} in Theorem~\ref{thm::stabman} this implies that $\Phi((\theta_0,\psi_0),t_0)\in\mathsf{W}^{s}_{(\theta^\star,\psi^\star)}$, and so from \ref{point1} $\forall t\geq t_0$, $\Phi((\theta_0,\psi_0),t)\in \mathsf{W}^{s}_{(\theta^\star,\psi^\star)}$. Reversing the flow, we obtain, that $\forall t\geq t_0$, $(\theta_0,\psi_0)\in \Phi(\Om_{(\theta^\star,\psi^\star)}\cap\mathsf{W}^{s}_{(\theta^\star,\psi^\star)},-t)$, or more generally,
		\begin{equation}\label{eq::Flows}
			(\theta_0,\psi_0) \in \bigcup_{k\in\N}\Phi\left(\Om_{(\theta^\star,\psi^\star)}\cap\mathsf{W}^{s}_{(\theta^\star,\psi^\star)},-k\right),
		\end{equation}
		where the right-hand side is the union over $k\in\N$ of initial conditions such that the corresponding solution has reached $\Om_{(\theta^\star,\psi^\star)}\cap\mathsf{W}^{s}_{(\theta^\star,\psi^\star)}$ at time $k$.

		The set defined in the right-hand side of \eqref{eq::Flows} has zero measure. Indeed, we showed that $(\theta^\star,\psi^\star)\in\ma_{<0}$ implies that $DG(\theta^\star,\psi^\star)$ has one eigenvalue with positive real part, so by Theorem~\ref{thm::stabman}, $\mathsf{W}^{s}_{(\theta^\star,\psi^\star)}$ has dimension strictly less than $2P$ (the dimension of $\R^P\times\R^P$), hence this manifold has zero measure. Due to the uniqueness of the solution of \eqref{eq::DIN}, for each $k\in\N$, $\Phi(\cdot,-k)$ is a local diffeomorphism, so $\Phi\left(\Om_{(\theta^\star,\psi^\star)}\cap\mathsf{W}^{s}_{(\theta^\star,\psi^\star)},-k\right)$ is also a zero-measure set. Finally, a countable union of zero-measure sets also has zero measure. So we proved that the set
		\begin{equation*}
			 \mathsf{I}_{(\theta^\star,\psi^\star)}=\left\{(\theta_0,\psi_0)\in\R^P\times\R^P\middle| \Phi((\theta_0,\psi_0),t)\xrightarrow[t\to+\infty]{} (\theta^\star,\psi^\star)\right\},
		\end{equation*}
		of initial conditions such that the associated solutions converge to $(\theta^\star,\psi^\star)$ has measure zero.

		Finally, since $\J$ is a Morse function it has isolated  critical points so the set $\ma_{<0}$ is countable and applying the reasoning above to any $(\theta^\star,\psi^\star)\in\ma_{<0}$, we get that   $\bigcup_{(\theta^\star,\psi^\star)\in\ma_{<0}} \mathsf{I}_{(\theta^\star,\psi^\star)}$ is a countable union of zero-measure sets, so it has zero measure. This set is the set of initializations whose associated solutions converge to a strict saddle point, which proves the theorem.
	\end{proof}
		%\qed

	\subsection{Further Analysis and Behavior Around Minimizers}\label{sec::hartman}

		 We keep the same notations as in the proof of Theorem~\ref{thm::MainResDIN}. We saw that to each $\lambda_p<0$ corresponds a positive eigenvalue of $DG(\theta^\star,\psi^\star)$. We now study the case $\lambda_p>0$. The two eigenvalues are the roots of \eqref{eq::chiMp}, whose discriminant is $\Delta_{M_p}=(\alpha+\beta\lambda_p)^2-4\lambda_p$. We denote $\lmin = \frac{(\sqrt{1-\alpha\beta}-1)^2}{\beta^2}$ and $\lmax= \frac{(\sqrt{1-\alpha\beta}+1)^2}{\beta^2}$, the sign of $\Delta_{M_p}$ is then given in the following lemma, proved later in Appendix~\ref{sec::teclemmas}.
		\begin{lemma}\label{lem::signofDelta}
			Let $\alpha\geq 0$, $\beta>0$ and $\lambda\in\R$. The quantity $(\alpha+\beta\lambda)^2-4\lambda$ is non-positive if and only if $\alpha\beta\leq1$ and $\lambda\in \left[\lmin, \lmax\right]$.
		\end{lemma}
		Then, either $\Delta_{M_p}\geq 0$, in that case the eigenvalues are
		$
		\sigma_{p,-} = -\frac{\alpha+\beta\lambda_p}{2} - \frac{\sqrt{\Delta_{M_p}}}{2} \quad\text{and}\quad  \sigma_{p,+} = -\frac{\alpha+\beta\lambda_p}{2} + \frac{\sqrt{\Delta_{M_p}}}{2},
		$
		or $\Delta_{M_p}< 0$ and the eigenvalues are complex valued:
		\begin{equation}\label{eq::negRoots}
			\sigma_{p,-} = -\frac{\alpha+\beta\lambda_p}{2} - i\frac{\sqrt{-\Delta_{M_p}}}{2} \quad\text{and}\quad \sigma_{p,+} = -\frac{\alpha+\beta\lambda_p}{2} + i\frac{\sqrt{-\Delta_{M_p}}}{2}.
		\end{equation}
		We thus deduce the following result when $\lambda_p>0$ (proved in Appendix~\ref{sec::teclemmas}).
		\begin{lemma}\label{lem::lambdappositive}
			With the notations of the proof of Theorem~\ref{thm::MainResDIN}, $\lambda_p>0 \implies \Re(\sigma_{p,+})<0$ and $\Re(\sigma_{p,-})<0$, where $\Re$ denotes the real-part operator.
		\end{lemma}

		In view of Lemma~\ref{lem::lambdappositive} and Theorem~\ref{thm::stabman}, we see that the stable manifolds associated to local minimizers of $\J$ with non-singular Hessian (\textit{i.e.}, where all $\lambda_p>0$), do not have measure zero. This holds \text{e.g.}, for all the local minima of a twice differentiable Morse functions.
		Furthermore, remark that when $\lambda_p>0$ (so in particular around minimizers), $DG(\theta^\star,\psi^\star)$ may have complex eigenvalues (Lemma~\ref{lem::signofDelta}), given in \eqref{eq::negRoots}.
		The existence of complex eigenvalues affects the behavior of the solutions of  \eqref{eq::DIN} around these minimizers as we show below.

	\subsubsection{The Hartman-Grobman Theorem}
	To this aim, we introduce the Hartman-Grobman theorem.
	\begin{theorem}[Hartman–Grobman \citep{perko2013differential}]\label{thm::Hartman}
		Consider the following dynamical system,
		\begin{equation}\label{eq::gensystem}
			\dt\Theta(t) = F(\Theta(t)),\quad t\in\R,
		\end{equation}
		where $\Theta:\R\to\R^{2P}$, $F:\R^{2P}\to\R^{2P}$ is $C^1$ and denote by $DF$ the Jacobian matrix of $F$. Assume that there exists $\Theta^\star\in\R^{2P}$ such that $F(\Theta^\star) = 0$ and $DF(\Theta^\star)$ has only eigenvalues with non-zero real part. Then, there exists a neighborhood $\Om_{\Theta^\star}$ of $\Theta^\star$ and a homeomorphism $H$ (a bijective continuous function whose inverse is continuous) such that, for any $\Theta_0\in \Om_{\Theta^\star}$, if $\Theta$ is a solution of \eqref{eq::gensystem} with $\Theta(0)=\Theta_0$, there exists an open interval of time $\mathsf{T}\subset \R$ containing $0$ such that the function $\Phi = H\circ\Theta$ is the solution of
		\begin{equation}\label{eq::linearized}
			\dt\Phi(t) = DF(\Theta^\star)\Phi(t),\quad t\in \mathsf{T},
		\end{equation}
		with initial condition $\Phi(0) = H(\Theta_0)$. The homeomorphism $H$ preserves the parametrization by time (it does not reverse time).
	\end{theorem}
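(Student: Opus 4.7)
The plan is to reduce to a standard functional-analytic fixed-point argument for the hyperbolic case. First I would translate coordinates so that $\Theta^\star = 0$ and write $F(\Theta) = L\Theta + g(\Theta)$ with $L = DF(\Theta^\star)$, $g(0)=0$, $Dg(0)=0$. Since $L$ has no eigenvalues with zero real part (the hyperbolicity assumption), its spectrum splits into parts with strictly negative and strictly positive real part; this yields an $L$-invariant decomposition $\R^{2P} = E^s \oplus E^u$ and the existence of an adapted norm for which $e^{Lt}$ is a contraction on $E^s$ for $t>0$ and on $E^u$ for $t<0$.

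Second, I would introduce a smooth cutoff, replacing $g$ by $\tilde g(\Theta) = \chi(\Vert\Theta\Vert/\delta)\,g(\Theta)$ for $\delta$ small. The modified vector field coincides with $F$ on a neighborhood of $0$, has $\tilde g$ compactly supported with Lipschitz constant as small as desired, and produces a complete flow. Because the conclusion of Theorem~\ref{thm::Hartman} is local, it suffices to conjugate the globally modified system to its linearization on all of $\R^{2P}$ and then restrict.

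Third, rather than attacking the flow directly, I would prove the discrete version first: construct a homeomorphism $H$ of $\R^{2P}$ satisfying $H\circ T = S \circ H$, where $T$ is the time-one map of $\dt\Theta = L\Theta + \tilde g(\Theta)$ and $S=e^L$. Writing $H = \mathrm{id}+h$ with $h$ bounded continuous, the conjugacy equation becomes a fixed-point equation $h = \mathcal{L}(h)$ in which one uses the stable/unstable projections to iterate $S^{-1}$ forward on $E^s$-components and $S$ backward on $E^u$-components. With the adapted norm and small enough Lipschitz constant for $\tilde g$, $\mathcal{L}$ is a contraction on the Banach space of bounded continuous maps $\R^{2P}\to\R^{2P}$, so Banach's fixed-point theorem gives a unique $h$. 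Running the same construction with $T$ and $S$ interchanged provides a candidate inverse $H^{-1}$; uniqueness of fixed points of the composed operators forces $H\circ H^{-1} = \mathrm{id} = H^{-1}\circ H$, hence $H$ is a homeomorphism.

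Finally, to pass from the time-one map to the full flow, I would define
\begin{equation*}
H(\Theta) \;=\; \int_0^1 e^{-Ls}\, \widetilde H\bigl(\varphi_s(\Theta)\bigr)\, ds,
\end{equation*}
where $\widetilde H$ is the discrete conjugacy and $\varphi_s$ is the flow of the modified system. A direct computation using $\widetilde H\circ\varphi_1 = e^L\circ\widetilde H$ gives $H\circ\varphi_t = e^{Lt}\circ H$ for every $t\in\R$, and a Neumann-type estimate shows that $H$ is close to the identity, hence a homeomorphism on a neighborhood $\Om$ of $0$; the open interval $\mathsf{T}$ is then any interval of times for which the trajectory stays in $\Om$. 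The main obstacle is step three: producing the correct adapted norm and tuning $\delta$ so that $\mathcal{L}$ is genuinely contracting, and then verifying that the continuity of $H$ survives the fixed-point construction (smoothness in general does \emph{not}, which is why only a homeomorphism—not a diffeomorphism—can be asserted, and this is precisely the delicate point that distinguishes Hartman--Grobman from its smoother Sternberg-type refinements). The preservation of time parameterization is automatic from the averaging formula.
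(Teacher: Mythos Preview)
The paper does not prove Theorem~\ref{thm::Hartman}; it is stated as a classical result with a citation to \citet{perko2013differential} and then simply applied in Section~\ref{sec::HGonDIN}. So there is no ``paper's own proof'' to compare against.

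Your sketch is the standard Pugh--Moser route (localize, cut off the nonlinearity, solve the conjugacy for the time-one map by a Banach fixed-point argument on bounded continuous perturbations of the identity, then average to recover a flow conjugacy), and as a plan it is correct. Two small remarks. First, the theorem as written in the paper says ``only non-zero eigenvalues''; the argument you outline---and the theorem itself---actually requires hyperbolicity, i.e., no eigenvalues with zero \emph{real part}. You silently made that (correct) substitution; it is worth flagging, since a purely imaginary eigenvalue would break the stable/unstable splitting and the contraction estimates in your step three. Second, in your final step the averaged $H$ is automatically a homeomorphism on all of $\R^{2P}$ (not merely near $0$): the same fixed-point uniqueness argument that gave an inverse to $\widetilde H$ gives one to $H$, so you do not need the ``close to the identity, hence local homeomorphism'' detour. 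The restriction to a neighborhood $\mathsf{\Omega}$ comes only from undoing the cutoff, since the modified and original vector fields agree there.
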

	This theorem essentially states that around stationary point $\Theta^\star$, if  $DF(\Theta^\star)$ has no purely imaginary eigenvalues, then the qualitative behavior of the solutions of \eqref{eq::gensystem} is similar to that of the linearized system \eqref{eq::linearized}.

	\subsubsection{Application to DIN}\label{sec::HGonDIN}
	\paragraph{Study of the linearized dynamics.} Let $(\theta^\star,\psi^\star)\in\ma $ such that $\theta^\star$ is a local minimizer of $\J$ and such that $\nabla^2\J(\theta^\star)$ has only positive eigenvalues (holds \textit{e.g.}, for Morse functions). Without loss of generality (\textit{i.e.}, up to a translation), assume that $(\theta^\star,\psi^\star)=(0,0)$.
	We first focus on the linearized system: fix an initialization $(\tildet_0,\tildep_0)\in\R^P\times\R^P$ and consider the differential equation
	\begin{equation}\label{eq::DINlinearized}
		\dt{}\begin{pmatrix}\tildet(t)\\\tildep(t)\end{pmatrix} = DG(0,0) \begin{pmatrix}\tildet(t)\\\tildep(t)\end{pmatrix},\quad t \in \R.
	\end{equation}
	This is a linear first-order ODE whose solution is  $\begin{pmatrix}\tildet(t)\\\tildep(t)\end{pmatrix} = Qe^{tD}Q^{-1}\begin{pmatrix}\tildet_0\\ \tildep_0\end{pmatrix}$, for all $t$, where $Q$ is a square matrix of size $2P\times 2P$ and $D$ is diagonal and contains the eigenvalues of $DG(0,0)$ such that $ DG(0,0) = QDQ^{-1}$.

	Since $\nabla^2\J(0)$ is assumed to have only positive eigenvalues, Lemma~\ref{lem::lambdappositive} states that all the eigenvalues of $DG(0,0)$ have negative real part. So, if all these eigenvalues are real, we see that $(\tildet(t),\tildep(t))$ is a sum of exponential functions decaying exponentially fast to the stationary point $(0,0)$ as $t\to + \infty$.
	However, if there exists an eigenvalue $\lambda_p$ of $\nabla^2\J(0)$ such that the corresponding $\Delta_{M_p}$ is negative. Then, $DG(0,0)$ has two complex eigenvalues given in \eqref{eq::negRoots} and the corresponding coordinates of the aforementioned matrix $e^{tD}$ take the form $e^{\frac{-(\alpha+\beta\lambda_p)}{2} t}\left(\cos(\sqrt{-\Delta_{M_p}} t) \pm i \sin(\sqrt{-\Delta_{M_p}} t)\right)$. Therefore $(\tildet(t),\tildep(t))$ still converges exponentially fast to the point $(0,0)$, but spirals around it due to the cosine and sine terms.
	Note that $\sqrt{-\Delta_{M_p}}$ is a decreasing function of $\beta$ (when $\Delta_{M_p}<0$), so increasing the parameter $\beta$ reduces the oscillations.

	\paragraph{Application of the theorem.} We now show that \eqref{eq::DIN} also exhibits the behavior discussed above. Since all the eigenvalues of $DG(0,0)$ have negative real part we can apply Theorem~\ref{thm::Hartman}: there exists a neighborhood $\Om$ of $(0,0)$ and a homeomorphism $H$ for which Theorem~\ref{thm::Hartman} holds. Consider a solution $(\theta,\psi)$ of \eqref{eq::systemG} with initial condition $(\theta_0,\psi_0)\in\Om$. Then, Theorem~\ref{thm::Hartman} states that $(\tildet,\tildep) = H(\theta,\psi)$ is solution to \eqref{eq::DINlinearized} with initial condition $H(\theta_0,\psi_0)$.
	Since $H$ is a homeomorphism, $(\theta,\psi)$ exhibits a similar spiraling behavior\footnote{Additionally $H$ preserves the parametrization by time (\textit{i.e.}, the orientation of oriented curves is preserved see \citep[Chapter 2.8, Definition 1]{perko2013differential}).} as that of $(\tildet,\tildep)$. So according to Lemma~\ref{lem::signofDelta}, for any $(\theta_0,\psi_0) \in\Om$, the corresponding solution $(\theta,\psi)$ spirals while converging to $(0,0)$ if and only if $\alpha\beta\leq 1$ and there exists eigenvalues of $\nabla^2\J(0)$ belonging to $]\lmin,\lmax[$.

	Finally, the discussion is generalized to any $(\theta_0,\psi_0)\in\R^P\times\R^P$: if the corresponding solution $(\theta,\psi)$ converges to $(0,0)$, then there exists $t_0\geq 0$ such that $(\theta(t),\psi(t))\in\Om$ for all $t\geq t_0$ and the arguments above hold after $t_0$.

	\subsubsection{Numerical Illustration of the Spiraling Phenomenon}
	We illustrate the spiraling phenomenon on a 2D convex quadratic function defined by $\J\colon(\theta_1,\theta_2)\in\R^2\mapsto\theta_1^2 + 2\theta_2^2$. It has a unique minimizer $(0,0)$ and a constant diagonal Hessian whose eigenvalues  are $\{2,4\}$. We take initial condition $(1,1)$ and approximate the solution of \eqref{eq::DIN} via the algorithm INNA (with small step-sizes), derived from \eqref{eq::DIN} and presented in next section.
	We consider two choices of parameters: $(\alpha,\beta)=(2,1)$ and $(\alpha,\beta)=(2,0.1)$, to illustrate the cases $\alpha\beta>1$ and $\alpha\beta<1$, respectively. For this last choice, the range of eigenvalues for which we should observe spirals is approximately $[1,359]$ (see Lemma~\ref{lem::signofDelta}), which includes $\{2,4\}$.

	The expected behavior is observed on the left of Figure~\ref{fig::spiral}: the trajectory spirals around $(0,0)$ when $\alpha\beta<1$ (red curve), and does not when $\alpha\beta>1$ (orange curve). Remark that when zooming very close to $(0,0)$, the oscillating behavior remains observable. However this qualitative results say nothing about the speed of convergence, as evidenced on the right of Figure~\ref{fig::spiral}. Spiraling trajectories may converge faster than non-spiraling ones. This is because the Hartman-Grobman theorem connects the solutions of \eqref{eq::DIN} and those of its linearized approximation through a mapping which is homeomorphic (hence continuous) but not necessarily differentiable, so the speed of convergence need not be preserved.
	Interestingly, rates of convergence for \eqref{eq::DIN} can be derived using the KL property (see \cite{castera2019inertial}), which is another reparametrization of the function around critical points, that allows deriving quantitative results instead of qualitative ones, as provided here.

	We also empirically investigate the case where $\alpha$ in \eqref{eq::DIN} is replaced by $\alpha/t$ due to its link with Nesterov's method  \citep{attouch2016fast}.
	In that case there is eventually a time after which $\alpha(t)\beta\leq 1$. Although this is out of the scope\footnote{We could consider non-autonomous ODEs \citep{palmer1973}, but we do not for the sake of simplicity.} of Theorem~\ref{thm::Hartman}, the spiraling phenomenon occurs. We can see it on Figure~\ref{fig::spiral}: for $(\alpha(t),\beta)=(2/t,0.1)$ (blue curve) the spirals are very large and the algorithm is much slower than it was for fixed $\alpha$. However using a larger $\beta$ (green curve), spirals are reduced (although still noticeable), resulting in faster convergence.
	\begin{figure}[t]
		\centering
		\begin{minipage}{0.5\linewidth}
			\centering
			\includegraphics[width=\linewidth]{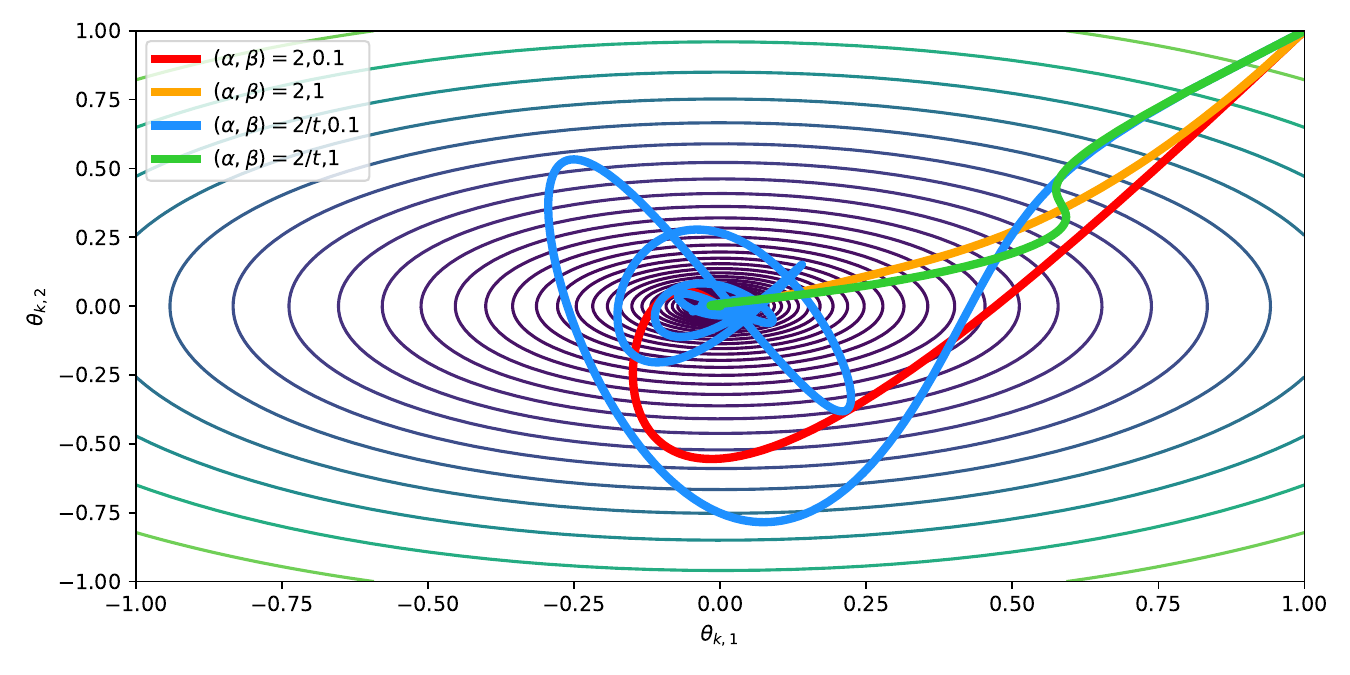}
			\\
			\begin{minipage}{0.49\linewidth}
				\centering
				\tiny Zoom x$10^2$\\
				\includegraphics[width=\linewidth]{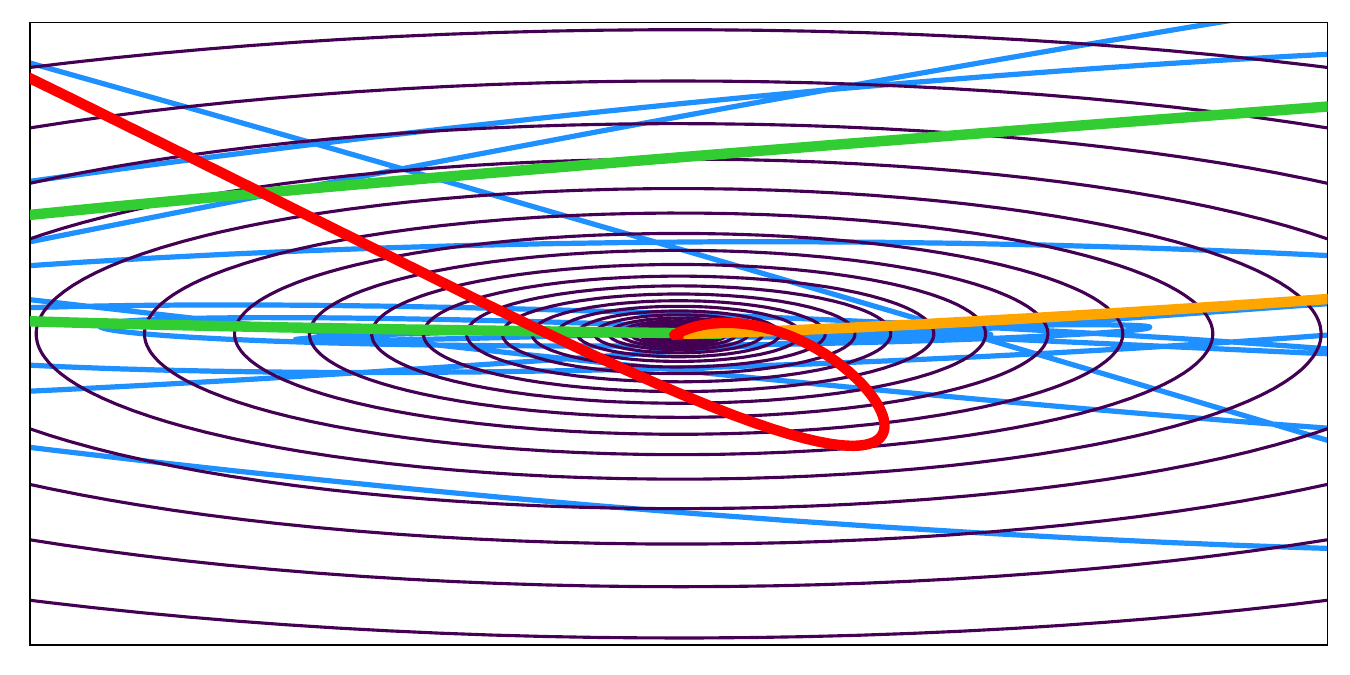}
			\end{minipage}
			\begin{minipage}{0.49\linewidth}
				\centering
				\tiny Zoom x$10^4$\\
				\includegraphics[width=\linewidth]{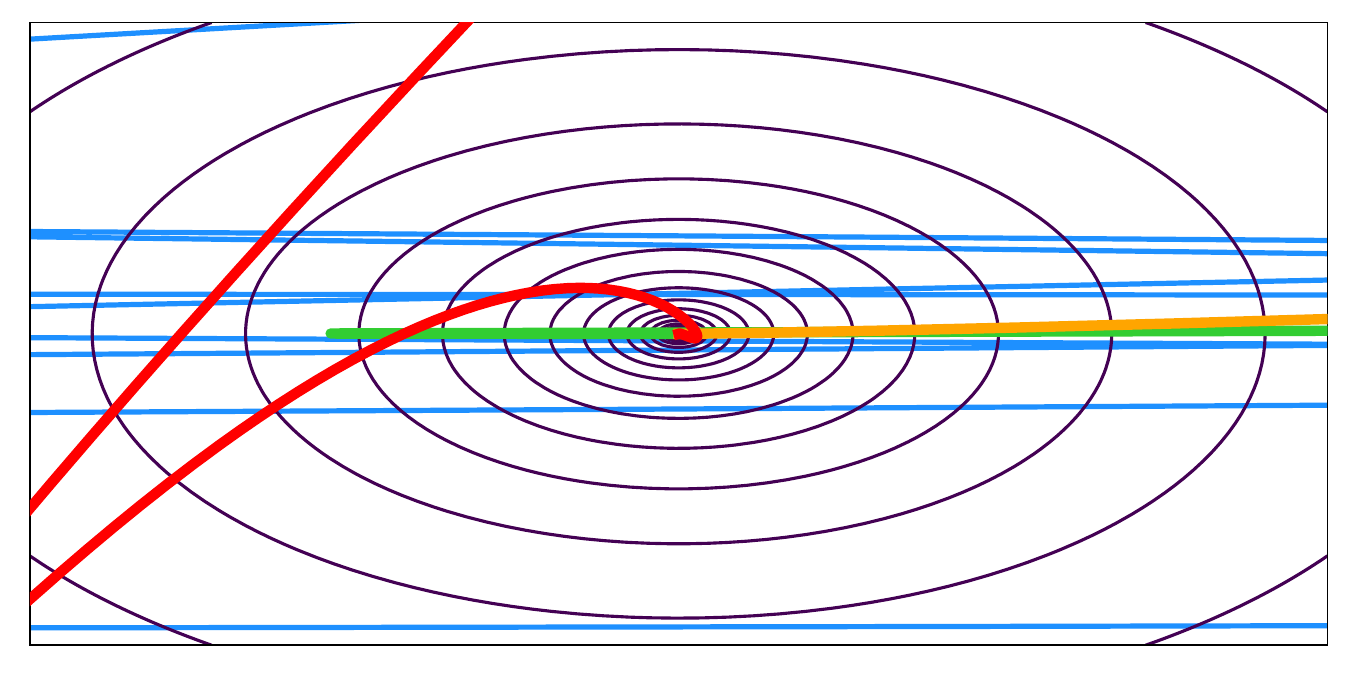}
			\end{minipage}
		\end{minipage}
		\begin{minipage}{0.2\linewidth}
			\includegraphics[width=\linewidth]{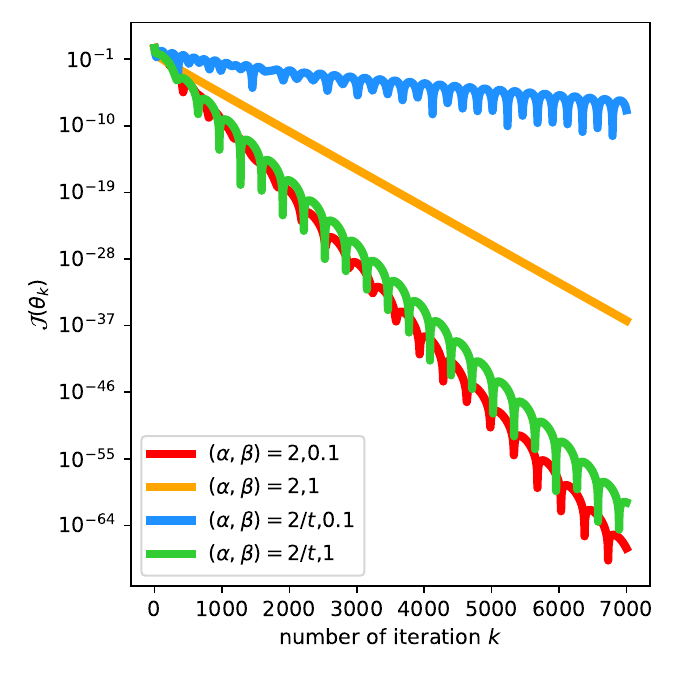}
			\\
			\includegraphics[width=\linewidth]{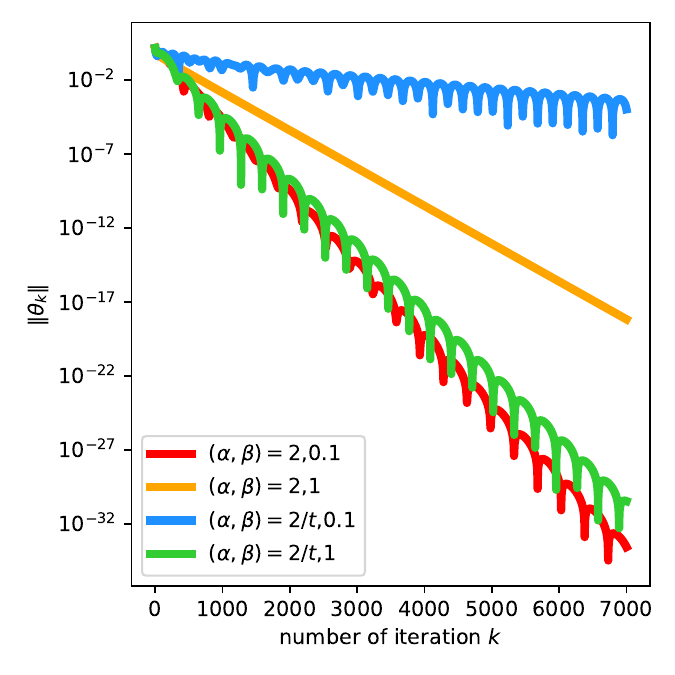}
		\end{minipage}
		\caption{\label{fig::spiral}Illustration of the spiral phenomenon. Left: trajectory on the landscape of $\J$ with two zooms on bottom-left figures. Right: value and distance to the minimizer against iterations.}
	\end{figure}

	\section{Asymptotic Behavior of INNA}\label{sec::INNA}

	We now turn our attention to the asymptotic behavior of the algorithm INNA \cite{castera2019inertial}.  INNA was originally designed for non-smooth and stochastic applications, yet, here we consider the case where $\J$ is still non-convex but $C^2$ and the algorithm is non-stochastic, so we can use fixed step-sizes. Fix $\alpha\geq 0$, $\beta>0$ and a step-size $\gamma$ to be the hyper-parameters of INNA, the algorithm reads:
	\begin{equation}\label{eq::Inna}
		\begin{cases}
			\theta_{k+1} &= \theta_k + \gamma\left[-(\alpha-\frac{1}{\beta})\theta_k  -\frac{1}{\beta}\psi_k -\beta \nabla\J(\theta_k) \right]\\
			\psi_{k+1} &= \psi_k + \gamma\left[-(\alpha-\frac{1}{\beta})\theta_k  -\frac{1}{\beta}\psi_k \right]
		\end{cases}.
	\end{equation}
	This algorithm is obtained via an explicit Euler discretization of \eqref{eq::DIN}, which is why they have shared properties: DIN and INNA have the same stationary points $\ma$ (see Appendix~\ref{sec::AppStabINNA}), and we will show that INNA is likely to avoid $\ma_{<0}$.
	\subsection{INNA Generically Avoids Strict Saddles}
	The major difficulty for INNA compared to DIN, is that the step-size $\gamma$ must be chosen carefully. To this aim, we need the following assumption.
	\begin{assumption}\label{ass::gradLipschitz}
		There exists $L>0$ such that $\nabla\J$ is $L$-Lipschitz continuous on $\R^P$, that is for any $\theta_1,\theta_2\in\R^P$, $\Vert  \nabla\J(\theta_1)-\nabla\J(\theta_2) \Vert \leq L \Vert \theta_1-\theta_2\Vert.$
	\end{assumption}
	Assumption~\ref{ass::gradLipschitz} implies that the eigenvalues of $\nabla^2\J$ are bounded by $L$ on $\R^P$.
	\begin{theorem}\label{thm::MainResINNA}
		Under Assumption~\ref{ass::gradLipschitz}, assume $\alpha>0$ and $\gamma>0$ and consider the following conditions:\\
		(i)\ $\gamma<\frac{\alpha+\beta L- \sqrt{(\alpha+\beta L)^2-4L}}{2L}$,
		\ and,\
		(ii)\
		$\gamma<\frac{-\alpha+\beta L + \sqrt{(\alpha-\beta L)^2+4L}}{2L}$.
		\\
		Then, for almost any initialization, INNA does not converge to a point in $\ma_{<0}$ if $\alpha\beta>1$ and (i) holds. When $\alpha\beta\leq1$, the same is true  if (ii) holds and either $L\in[\lmin,\lmax]$ or (i) holds.
	\end{theorem}
	Remark that unlike for DIN, we do not assume that $\J$ is a Morse function. The proof is postponed to Appendix~\ref{sec::AppStabINNA} and relies on similar arguments  as those used for Theorem~\ref{thm::MainResDIN}. It requires however a different stable manifold theorem: for a function $F$, denote $F^k=\underbrace{F\circ \ldots\circ F}_{k\ \text{compisitions}}$, the theorem is the following.
		\begin{theorem}[{\cite[III.7]{shub2013global}}]\label{thm::DISCstabman}
			Let $\Theta^\star\in\R^{2P}$ be a fixed point for the $C^1$ local diffeomorphism $F:\mathsf{U}\to\R^{2P}$ where $\mathsf{U}\subset\R^{2P}$ is a neighborhood of $\Theta^\star$. Let  $\mathsf{E}^{sc}_{\Theta^\star}$ be the linear subspace spanned by the eigenvalues of $DF(\Theta^\star)$ with magnitude less than one.
			There exists a neighborhood $\Om_{\Theta^\star}$ of $\Theta^\star$ and a $C^1$ manifold $\mathsf{W}^{sc}_{\Theta^\star}$ tangent to $\mathsf{E}^{sc}_{\Theta^\star}$ at $\Theta^\star$---whose dimension is the number of eigenvalues of $DF(\Theta^\star)$ with magnitude less than one---such that, for $\Theta_0\in\R^{2P}$,
			\renewcommand{\theenumi}{(\roman{enumi})}%
			\begin{enumerate}
				\item  If $\Theta_0\in\mathsf{W}^{sc}_{\Theta^\star}$ and $F(\Theta_0)\in \Om_{\Theta^\star}$ then $ F(\Theta_0)\in\mathsf{W}^{sc}_{\Theta^\star}$ (Invariance).
				\item If $\forall k\in\N_{>0},\ F^k(\Theta_0)\in \Om_{\Theta^\star}$, then  $\Theta_0\in \mathsf{W}^{sc}_{\Theta^\star}$.
			\end{enumerate}%
		\end{theorem}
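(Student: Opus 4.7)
The plan is a Lyapunov--Perron construction: identify the points of $\mathsf{W}^{sc}(\Theta^\star)$ with the initial conditions producing a bounded forward orbit that stays in a small neighborhood of $\Theta^\star$, and realize those orbits as fixed points of a contraction in a sequence space. Without loss of generality translate so $\Theta^\star=0$ and write $F(\Theta)=A\Theta+N(\Theta)$ with $A=DF(0)$, $N\in C^1$, $N(0)=0$, $DN(0)=0$. Decompose $\R^{2P}=E^{sc}\oplus E^u$ into $A$-invariant subspaces corresponding respectively to eigenvalues of magnitude at most one and strictly greater than one; in a suitably adapted norm, $A_u:=A|_{E^u}$ is invertible with $\|A_u^{-1}\|\leq \mu<1$. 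Introduce a smooth cutoff so that the modified nonlinearity $\widetilde N$ agrees with $N$ on a small closed ball $\mathsf{\Omega}$ around $0$ and has arbitrarily small global Lipschitz constant $L$ on $\R^{2P}$.

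Next I would derive variation-of-constants identities for every bounded forward orbit $(\Theta_k)_{k\geq 0}$ of the modified dynamics $\Theta_{k+1}=A\Theta_k+\widetilde N(\Theta_k)$. Writing $\Theta_k=(x_k,y_k)\in E^{sc}\oplus E^u$, one has
\[
x_k = A_{sc}^k x_0 + \sum_{j=0}^{k-1} A_{sc}^{\,k-1-j}\,\widetilde N_{sc}(\Theta_j), \qquad y_k = -\sum_{j\geq k} A_u^{-(j-k+1)}\,\widetilde N_u(\Theta_j),
\]
the second formula coming from inverting the unstable recursion and using boundedness together with $\|A_u^{-1}\|<1$. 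For each small $x_0\in E^{sc}$ the right-hand side defines an operator $T_{x_0}$ on a Banach space of bounded sequences, equipped with a weight $(1+\eta)^{-k}$ with $\eta$ chosen between the center growth rate and $\mu^{-1}$ to produce a spectral gap. For $L$ small enough, $T_{x_0}$ is a uniform contraction; its unique fixed point yields a map $h\colon E^{sc}\to E^u$ by $h(x_0):=y_0$, and I set $\mathsf{W}^{sc}(\Theta^\star)$ equal to the graph of $h$ restricted to a small ball.

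The two listed properties are then essentially free. Property (ii) holds because any orbit trapped in $\mathsf{\Omega}$ for all $k\geq 0$ obeys the unmodified dynamics there, hence must agree with the unique fixed point of $T_{x_0}$ for $x_0$ equal to its $E^{sc}$-component, so $\Theta_0$ lies on the graph of $h$. Property (i) is a shift argument: whenever $\Theta_0$ lies on the graph and $F(\Theta_0)\in\mathsf{\Omega}$, the shifted orbit is still a bounded orbit through $F(\Theta_0)$, so (ii) places $F(\Theta_0)$ on the manifold. Smoothness of $h$ (with $Dh(0)=0$, giving the asserted tangency) follows from the uniform contraction principle with $C^1$ dependence on parameters, applied to the formally differentiated fixed-point equation.

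The principal difficulty is handling a possible \emph{center} component in $E^{sc}$ (eigenvalues of magnitude exactly one). Without it, $A_{sc}$ is a strict contraction and the argument collapses to a plain $\ell^\infty$ computation. With it, there is no spectral gap at the unit circle, and one must carefully balance the weight $\eta$, the cutoff radius, and the Lipschitz constant $L$ so that the weighted operator norm of $T_{x_0}$ stays below one; the manifold is then only locally defined and no longer globally unique, but this matches what the statement claims. A second technical wrinkle is the $C^1$ bootstrap: one must verify that the formally differentiated series converges uniformly in the weighted norm---typically by enlarging the weight $\eta$ slightly and invoking the fiber contraction theorem of Hirsch and Pugh---to conclude smoothness of the fixed point as a function of $x_0$.
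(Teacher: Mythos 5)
The paper does not prove this statement at all---it is quoted verbatim as Theorem~III.7 from \citet{shub2013global} and applied as a black box, so there is no in-paper argument to compare against. Your Lyapunov--Perron construction is therefore a genuine addition. It is a valid and standard proof strategy, and it differs from what Shub actually does (Shub's book proves the local (center-)stable manifold theorem by the Hadamard graph-transform method, iterating an operator on a space of Lipschitz graphs and invoking a $C^r$-section theorem for fiber contractions; you instead characterize the manifold as the fixed-point locus of an integral-type operator on a weighted sequence space). Both routes are classical; the Lyapunov--Perron route you chose tends to make the $C^1$ bootstrap and the tangency $Dh(0)=0$ somewhat cleaner via the parameterized contraction principle, while the graph-transform route avoids the weighted norms and handles the center spectrum by working with graphs whose slope is controlled by the spectral gap.

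Two small remarks on your write-up. First, you correctly split $\R^{2P}=E^{sc}\oplus E^u$ with $E^{sc}$ carrying eigenvalues of magnitude \emph{at most} one; note that the paper's statement literally says ``less than one,'' which is a slip---property~(ii) would be false for a pure stable manifold when unit-modulus eigenvalues exist (precisely the situation exploited in the INNA proof when $\lambda_p=0$ gives $\sigma_{p,+}=1$), so the center-stable version you prove is the one actually needed. Second, your formula
\[
y_k = -\sum_{j\geq k} A_u^{-(j-k+1)}\,\widetilde N_u(\Theta_j)
\]
requires justifying that $A_u^{-m}y_{k+m}\to 0$ for the class of orbits in play; since you are working in the weighted norm $\sup_k (1+\eta)^{-k}\|\Theta_k\|$ with $\mu(1+\eta)<1$, this indeed holds, but it is worth saying explicitly that an orbit trapped in $\mathsf\Omega$ is uniformly bounded and hence \emph{a fortiori} weighted-bounded (the weight is non-increasing), which is what places it in the uniqueness class of $T_{x_0}$ and makes your proof of~(ii) watertight. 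With those points made explicit, the proposal is a correct sketch of the result.
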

	Theorem~\ref{thm::MainResINNA} is proved by introducing a mapping $G$ such that INNA  reads $(\theta_{k+1},\psi_{k+1})= G(\theta_{k},\psi_{k})$, and by applying Theorem~\ref{thm::DISCstabman} around the fixed points of $G$. The proof involves however significant technical difficulties related to the step-size $\gamma$ (see Appendix~\ref{sec::AppStabINNA}).
	Theorem~\ref{thm::MainResINNA} is non-trivial only if INNA converges, so we provide sufficient conditions so that this is the case.
	\begin{theorem}\label{thm::convergenceofINNA}
		Assume that $\alpha>0$, under Assumption~\ref{ass::gradLipschitz}, if $\gamma$ is such that,
		\begin{equation}\label{eq::condgammaCV2}
			0<\gamma<\min\left(\frac{2\alpha}{(1+\alpha\beta)L+\alpha^2},\, \frac{1}{\alpha} + \beta,\, 2\beta\right),
		\end{equation}
		then any sequence $(\theta_k,\psi_k)_{k\in\N}$ generated by INNA is such that
		$(\J(\theta_k))_{k\in\N}$ converges and $\lim\limits_{k\to+\infty}\Vert\nabla\J(\theta_k)\Vert^2=0$.
		If in addition $(\theta_k)_{k\in\N}$ is bounded and $\J$ has isolated critical points, then $(\theta_k)_{k\in\N}$ converges to a critical point of $\J$.

	\end{theorem}
		The proof is postponed to Appendix~\ref{sec::AppCV}.
		We make the following remarks.
	\begin{remark}[Comments on Theorem~\ref{thm::convergenceofINNA}]
		\item[--] In \eqref{eq::condgammaCV2}, only the condition $\gamma<2\alpha/\left((1+\alpha\beta)L+\alpha^2\right)$ depends on $\J$ (through $L$), and it resembles the condition $\gamma<2/L$ used for GD \citep[Proposition~2.3.2]{bertsekas1998nonlinear}.
		\item[--] From the first part of Theorem~\ref{thm::convergenceofINNA} any converging sub-sequence of $(\theta_k)_{k\in\N}$ converges to critical point. The boundedness assumption then ensures the convergence of the full sequence. It holds for example when $\J$ is coercive. A more general result with relaxed boundedness could be stated, see \cite[Theorem~2.1]{truong2018backtracking}.
		\item[--] The assumption of isolated critical points might be removed  when $\J$ has the KL property, see \textit{e.g.}, \cite{attouch2013convergence,ochs2018local,laszlo2021convergence}.
		\item[--] By combining Theorem~\ref{thm::MainResINNA} and~\ref{thm::convergenceofINNA}, we could formulate a practical corollary for INNA, analogous to Corollary~\ref{cor::dincor}.
	\end{remark}

	\subsection{Numerical Illustration}\label{sec::numexp}
	\begin{figure}[t]
		\centering
		\begin{minipage}{0.38\linewidth}
			\centering
			\includegraphics[width=\linewidth]{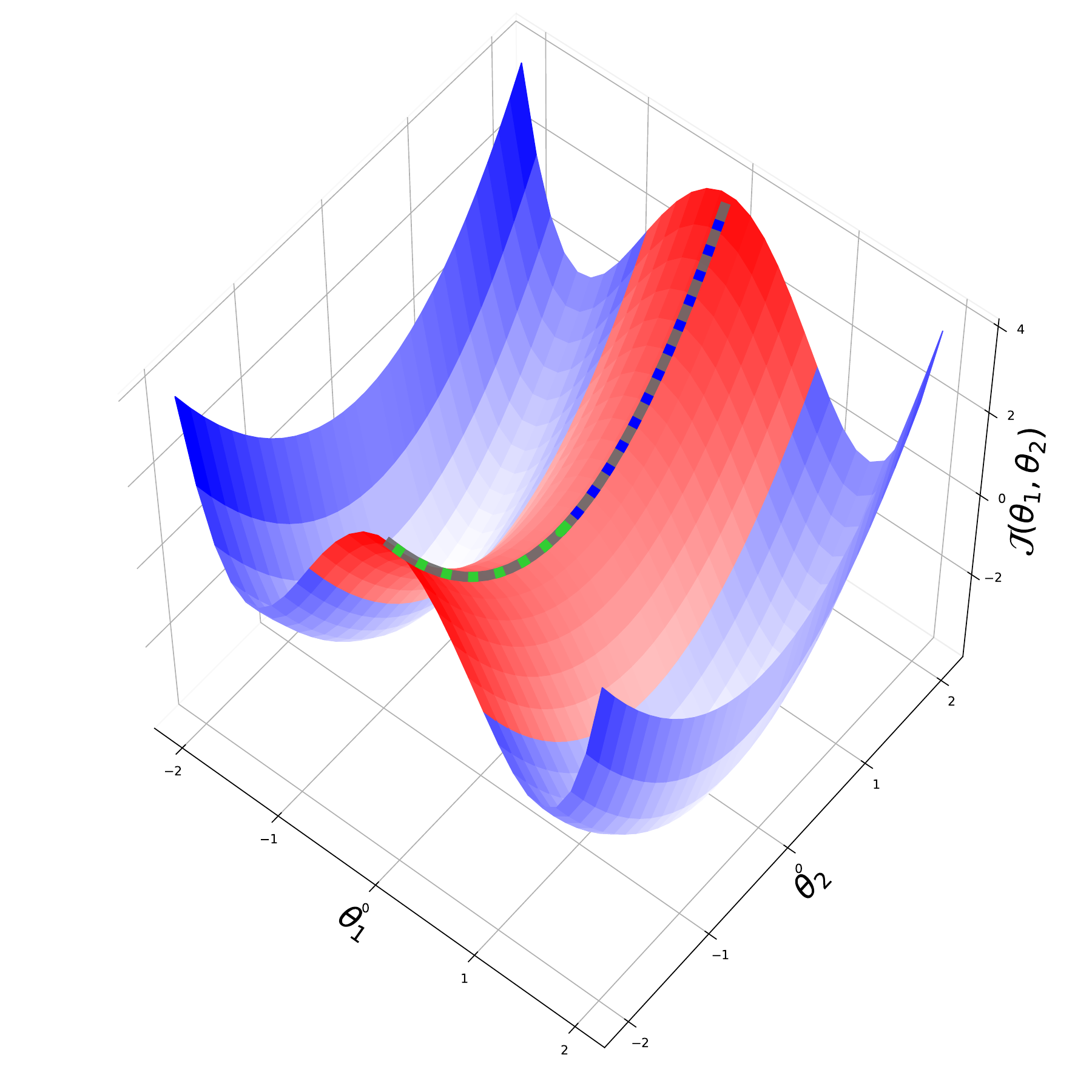}
		\end{minipage}
		\begin{minipage}{0.38\linewidth}
			\centering
			\includegraphics[width=\linewidth]{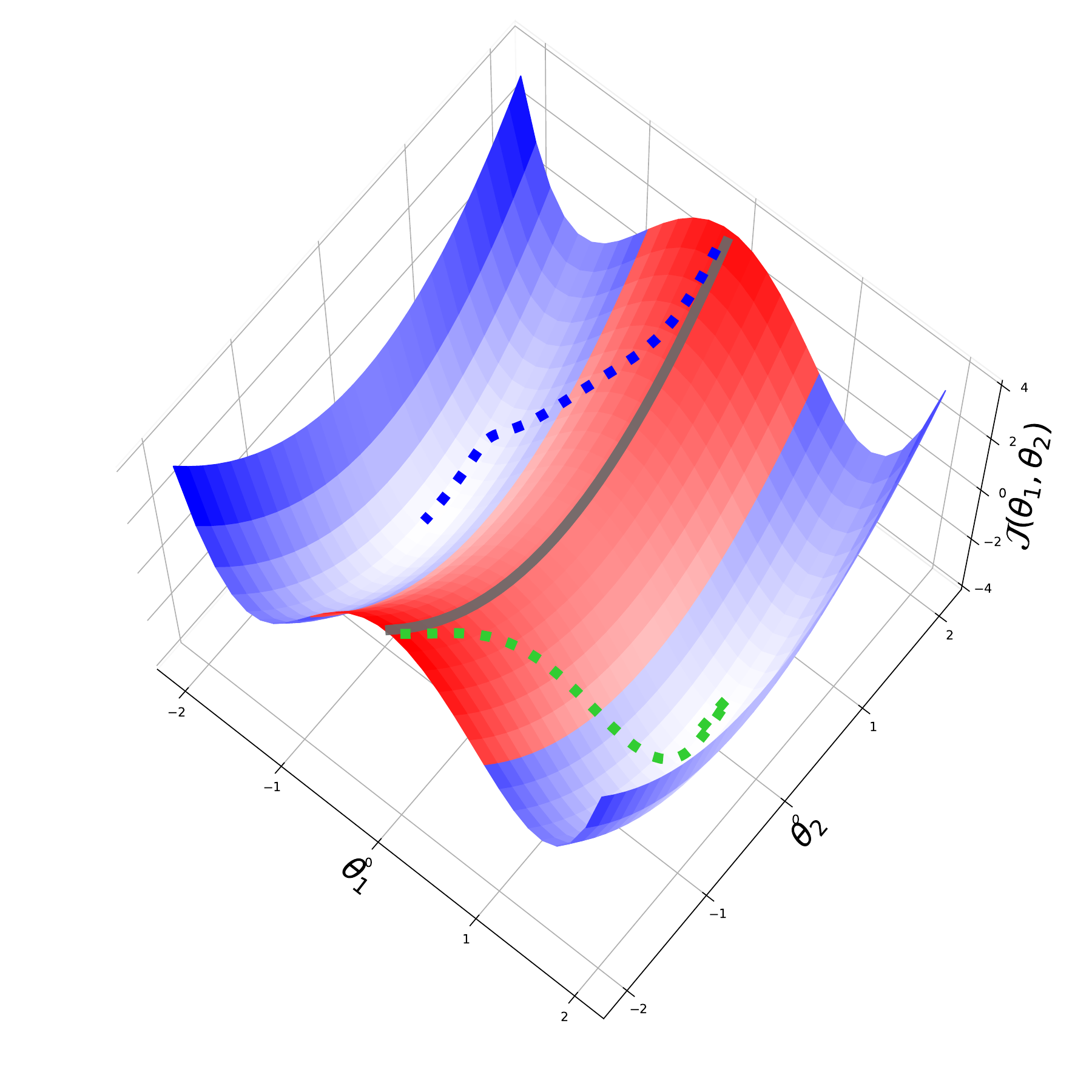}
		\end{minipage}
		\begin{minipage}{0.6\linewidth}
			\includegraphics[width=\linewidth]{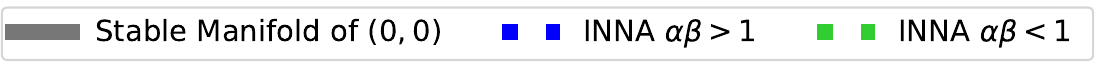}
		\end{minipage}
		\caption{\label{fig::escaping} Evolution of the iterates of INNA on the landscape of the 2D function $\J$ of Section~\ref{sec::numexp} for two choices of $(\alpha,\beta)$. Red and blue surfaces represent locally concave and convex parts of $\J$ respectively. Left figure corresponds to initializations on the stable manifold of $(0,0)$, which yield convergence to $(0,0)$. Right figure represents initializations outside the manifold and convergence to local minimizers.}
	\end{figure}

	We finish the study of INNA by illustrating Theorem~\ref{thm::MainResINNA} on a 2D example. We consider the non-convex $C^2(\R^2)$ function $ \J(\theta_1,\theta_2)=\theta_1^4 - 4\theta_1^2 + \theta_2^2$. It has two minimizers $(-\sqrt{2},0)$ and $(\sqrt{2},0)$ and one strict saddle $(0,0)$. The landscape of $\J$ and the results are displayed on Figure~\ref{fig::escaping}. INNA converges to the strict saddle point $(0,0)$ only when initialized on the manifold $\theta_2\in\R\mapsto (0,\theta_2)$, which has indeed zero measure  (left figure). When initialized anywhere else, it avoids the strict saddle (right figure).
	In addition to this illustration, we ran INNA for $1000$ random Gaussian initializations sampled from $\mathcal{N}_2(0,10^{-24})$, hence extremely close to the saddle point $(0,0)$. In all cases, INNA escaped the saddle $(0,0)$ and converged\footnote{Since $\nabla \J$ is not globally Lipschitz, a local Lipschitz constant $\hat L=50$ was used to locally satisfy the assumptions of Theorems~\ref{thm::MainResINNA} and~\ref{thm::convergenceofINNA}. Convergence was also empirically checked.} to one of the two minima.
	We then did the same for $1000$ random initializations on the stable manifold of $(0,0)$ and INNA always converged to $(0,0)$. The results and code to reproduce these experiments are available at \url{https://github.com/camcastera/INNAavoidsSaddles}.

	\section{Conclusion}
	We showed that DIN and INNA are likely to avoid strict saddle points despite their second-order nature, and even for functions with non-isolated critical points in the case of INNA. This makes them relevant to tackle non-convex minimization problems. We also provided new convergence results for INNA with fixed step-size and a qualitative analysis of the influence of $\alpha$ and $\beta$ on the solutions of DIN around minimizers.
	As for future work, we could extend the results to INNA with varying step-sizes $\gamma_k$. We may do so in two cases where recent progress was made for GD: when $\gamma_k$ is chosen via line-search \citep{truong2019convergence}, and when $\gamma_k$ asymptotically vanishes \citep{panageas2019first}. Finally, one may tackle the stochastic case where we intuitively expect that stochasticity actually helps INNA escaping saddle points as it is the case for stochastic GD \citep{mertikopoulos2020almost}.

	\FloatBarrier

	%%%%%%%%%%%%%%%%%%%%%%%%%%%%%%%%%%%%%%%%%%%%%%%%%%%%%%%%%%%%%%%%%%
	\section*{Acknowledgment}
	The author acknowledges the support of the European Research Council (ERC FACTORY-CoG-6681839) and the Air Force Office of Scientific Research (FA9550-18-1-0226). The author deeply thanks J\'er\^ome Bolte, C\'edric F\'evotte, and Edouard Pauwels for their valuable comments and the anonymous reviewers for their suggestions which led to significant improvements, such as tackling non-isolated critical points. The numerical experiments were made with the following libraries:  \citep{rossum1995python,walt2011numpy,hunter2007matplotlib}.

	\newpage
	\appendix
	\begin{flushleft}
		{\LARGE \textbf{Appendices}}
	\end{flushleft}

	%%%%%%%%%%%%%%%%%%%%%%%%%%%%%%%%%%%%%%%%%%%%%%%%%%%%%%%%%%%%%%%%%%%%%%%%

	\section{Permutation Matrices}\label{sec::Permut}
	\newcommand{\modulo}{\mathrm{mod}}
	We specify the permutations matrices used to obtain the block diagonalization in \eqref{eq::blockdiagMat}. Denote by $\modulo$ the modulo operator. We can choose the permutation matrix $U\in\R^{2P\times 2P}$ as the matrix whose coefficients are all zero except the following, for all $p\in\{1,\ldots,P\}$,
	\begin{equation}
		\label{eq::permutationU}
			\text{$P$ odd:}\
			\begin{cases}
				U_{P-p+1,p} &=1-\modulo(p,2)\\
				U_{P+p,2P-p+1}&=\modulo(p,2)\\
				U_{p,2P-p}& = \modulo(p,2)
			\end{cases},
			\text{ $P$ even:}\
			\begin{cases}
				U_{p,p} &=\modulo(p,2)\\
				U_{P+p,P+p}&=1-\modulo(p,2)\\
				U_{P+p,p} &= \modulo(p,2)\\
				U_{p,P+p} &= \modulo(p,2)
			\end{cases}.
	\end{equation}

	\section{Proof of Theorem~\ref{thm::MainResINNA}}\label{sec::AppStabINNA}

	We consider functions with possibly uncountably-many critical points, this yields additional difficulties, which we overcome using the following result as done in \cite{panageas2017}.
	\begin{lemma}[Lindelőf \cite{kelley2017general}]\label{lem::lindelof}
		For every open cover there is a countable sub-cover.
	\end{lemma}

		The proof of Theorem~\ref{thm::MainResINNA} follows similar steps as that of Theorem~\ref{thm::stabman}, so we omit some details and use the notations of Section~\ref{sec::stabmanif}. First, for any $(\theta,\psi)\in\R^P\times\R^P$, we redefine $G$:
	\begin{equation}\label{eq::InnaG}
		G\begin{pmatrix}
			\theta\\\psi
		\end{pmatrix} = \begin{pmatrix}
			\theta + \gamma\left[-(\alpha-\frac{1}{\beta})\theta  -\frac{1}{\beta}\psi -\beta \nabla\J(\theta) \right]\\
			\psi + \gamma\left[-(\alpha-\frac{1}{\beta})\theta  -\frac{1}{\beta}\psi \right]
		\end{pmatrix},
	\end{equation}
	so that iterations $k\in\N$ of INNA read $(\theta_{k+1},\psi_{k+1}) = G(\theta_{k},\psi_{k})$.
	Remark that the set of fixed points of $G$ is $\ma$, the stationary points of \eqref{eq::DIN}, indeed, for any $(\theta,\psi)\in\R^P\times\R^P$,
	\begin{equation*}
		G(\theta,\psi) = (\theta,\psi) \iff
		\begin{cases}
			-(\alpha-\frac{1}{\beta})\theta  -\frac{1}{\beta}\psi -\beta \nabla\J(\theta) = 0\\
			-(\alpha-\frac{1}{\beta})\theta  -\frac{1}{\beta}\psi = 0
		\end{cases} \iff
		\begin{cases}
			\nabla\J(\theta) = 0\\
			\psi = (1-\alpha\beta)\theta
		\end{cases}.
	\end{equation*}
	Since $G$ is $C^1$ on $\R^{P}\times\R^P$, the Jacobian matrix of $G$ (displayed by block) reads,
	\begin{equation*}
		DG(\theta,\psi) = \begin{pmatrix}
			(1 - \gamma(\alpha-\ovb)) I_P -\gamma\beta\nabla^2\J(\theta) && -\frac{\gamma}{\beta} I_P\\
			-\gamma(\alpha-\ovb) I_P && (1-\frac{\gamma}{\beta})I_P
		\end{pmatrix}.
	\end{equation*}
	We can again block-diagonalize $DG(\theta,\psi)$ (see \eqref{eq::blockdiagMat}), in blocks of the form  (up to symmetric permutations): $M_p = \begin{pmatrix}
		1 - \gamma(\alpha-\ovb) -\gamma\beta\lambda_p & & -\frac{\gamma}{\beta} \\
		-\gamma(\alpha-\ovb) & & 1-\frac{\gamma}{\beta}
	\end{pmatrix}$, where $\lambda_p$ is an eigenvalue of $\nabla^2\J(\theta)$.
	To use the Theorem~\ref{thm::DISCstabman}, we need $G$ to be a local diffeomorphism.
	\begin{theorem}\label{thm:INNAdiffeo}
		Under the same assumptions as that of Theorem~\ref{thm::MainResINNA}, the mapping $G$ defined in \eqref{eq::InnaG} is a local diffeomorphism from $\R^P\times\R^P$ to $\R^P\times\R^P$.
	\end{theorem}
	This result is proved later in Section~\ref{sec::teclemmas}. We can now prove Theorem~\ref{thm::MainResINNA}.

	\begin{proof}[Proof of Theorem~\ref{thm::MainResINNA}]
		Let $\alpha$, $\beta$ and $\gamma$ such that the assumptions of the theorem hold and let $G$ defined in \eqref{eq::InnaG} with these parameters. By  Theorem~\ref{thm:INNAdiffeo}, $G$ is a local diffeomorphism.
		Let $(\theta^\star,\psi^\star)\in\ma_{<0}$, to use Theorem~\ref{thm::DISCstabman} we study the magnitude of the eigenvalues of $DG(\theta^\star,\psi^\star)$. Let $\lambda_p<0$ be a negative eigenvalue of $\nabla^2\J(\theta^\star)$, using the notations and elements stated in the beginning of this section, the eigenvalues of $M_p$ are the roots of
		\begin{equation*}
			\chi_{M_p}(X) = X^2 - \mathrm{trace}(M_p)X + \det(M_p) =X^2 - (2 - \gamma(\alpha+\beta\lambda_p) )X + 1 - \gamma(\alpha+\beta\lambda_p) + \gamma^2\lambda_p .
		\end{equation*}
		The discriminant of $\chi_{M_p}$ is:
		\begin{equation*}
			\Delta_{M_p} = (2 - \gamma(\alpha+\beta\lambda_p) )^2 - 4 (1 - \gamma(\alpha+\beta\lambda_p) + \gamma^2\lambda_p)
			= \gamma^2\left((\alpha+\beta\lambda_p)^2 - 4\lambda_p\right).
		\end{equation*}
		Remark that Lemma~\ref{lem::signofDelta} gives again the sign of $\Delta_{M_p}$. Thus since $\lambda_p<0$, we necessarily have $\Delta_{M_p}\geq 0$, and can ignore the case $\Delta_{M_p}<0$.
		So $M_p$ has two real eigenvalues,
		\begin{equation*}
			\begin{cases}
				\sigma_{p,+} &= 1 - \frac 1 2 \gamma(\alpha+\beta\lambda_p) + \frac{1}{2} \gamma\sqrt{(\alpha+\beta\lambda_p)^2 - 4\lambda_p}  \\
				\sigma_{p,-} &=  1 - \frac 1 2 \gamma(\alpha+\beta\lambda_p) - \frac{1}{2} \gamma\sqrt{(\alpha+\beta\lambda_p)^2 - 4\lambda_p}
			\end{cases}.
		\end{equation*} Since $\lambda_p<0$, then $\vert\alpha+\beta\lambda_p\vert < \sqrt{(\alpha+\beta\lambda_p)^2 - 4\lambda_p}$, so observe that $\sigma_{p,+}>1$ and $\sigma_{p,-}<1$, so $DG(\theta^\star,\psi^\star)$ has at least one eigenvalue with magnitude larger than one.

		We can now use the stable manifold theorem, we omit some details since the arguments are the same as for the proof of Theorem~\ref{thm::MainResDIN}.
		By Theorem~\ref{thm::DISCstabman}, around each $(\theta^\star,\psi^\star)\in\ma_{<0}$, there exists a neighborhood $\Om_{(\theta^\star,\psi^\star)}$ on which the stable manifold theorem holds.
		Denote by $\mathsf{A}$
		the possibly uncountable union of all these neighborhoods: $\mathsf{A} = \bigcup_{(\theta^\star,\psi^\star)\in\ma_{<0}} \Om_{(\theta^\star,\psi^\star)}$. By Lemma~\ref{lem::lindelof}, there exists a countable sub-cover of this set, \textit{i.e.}, there exists a sequence $(\theta_i^\star,\psi_i^\star)_{i\in\N}$ in $\ma_{<0}$ such that
		\begin{equation}\label{eq::subcover}
			\mathsf{A} = \bigcup_{i\in\N} \Om_{(\theta_i^\star,\psi_i^\star)}.
		\end{equation}
		Let $(\theta^\star,\psi^\star)\in\ma_{<0}$, it need not be an element of $(\theta_i^\star,\psi_i^\star)_{i\in\N}$, but according to \eqref{eq::subcover}, there exists $i\in\N$ such that $(\theta^\star,\psi^\star)\in\Om_{(\theta_i^\star,\psi_i^\star)}$. Let
		an initialization $(\theta_0,\psi_0)$ such that the associated realization $(\theta_k,\psi_k)_{k\in\N}$ of INNA  converges to $(\theta^\star,\psi^\star)$. This means that there exists $k_0\in\N$ such that $\forall k\geq k_0$, $G^k(\theta_0,\psi_0)\in\Om_{(\theta_i^\star,\psi_i^\star)}$ and thus $G^k(\theta_0,\psi_0)\in\mathsf{W}^{sc}_{(\theta_i^\star,\psi_i^\star)}$, where $\mathsf{W}^{sc}_{(\theta_i^\star,\psi_i^\star)}$ is the stable manifold around $(\theta_i^\star,\psi_i^\star)$ as defined in Theorem~\ref{thm::DISCstabman}.
		By Theorem~\ref{thm:INNAdiffeo}, $G$ is a local diffeomorphism, so we can reverse the iterations and obtain,
		$
			(\theta_0,\psi_0) \in \bigcup_{j\in\N}G^{-j}\left(\Om_{(\theta_i^\star,\psi_i^\star)}\cap\mathsf{W}^{sc}_{(\theta_i^\star,\psi_i^\star)}\right)
		$.
		 Since $(\theta_i^\star,\psi_i^\star)\in\ma_{<0}$, we showed that $DG(\theta_i^\star,\psi_i^\star)$ has at least one eigenvalue with magnitude strictly larger than $1$, so by Theorem~\ref{thm::DISCstabman}, $W^{sc}_{(\theta_i^\star,\psi_i^\star)}$ has zero measure. Then, by Theorem~\ref{thm:INNAdiffeo} for all $j\in\N$, $G^{-j}$ is a local diffeomorphism, so the union above has zero measure. Using \eqref{eq::subcover}, the rest of the proof is then similar to the end of that of Theorem~\ref{thm::MainResDIN} since $
		 \bigcup_{i\in\N}\left[\bigcup_{j\in\N}G^{-j}\left(\Om_{(\theta_i^\star,\psi_i^\star)}\cap\mathsf{W}^{sc}_{(\theta_i^\star,\psi_i^\star)}\right)\right]
		 $ is a countable union of zero-measure sets, so it has again measure zero.
	\end{proof}
		%\qed

	\subsection{Missing Proofs}\label{sec::teclemmas}
	We begin by proving the lemmas stated in Section~\ref{sec::hartman}.
	\begin{proof}[Proof of Lemma~\ref{lem::signofDelta}]
		Let $\alpha\geq 0$, and $\beta>0$, the function $h(\lambda)=(\alpha+\beta\lambda)^2-4\lambda = \beta^2\lambda^2 + 2(\alpha\beta-2)\lambda +\alpha^2$ is a second-order polynomial in $\lambda$ whose discriminant is $16(1-\alpha\beta)$. If $\alpha\beta>1$ this discriminant is negative so $h$ is always positive. If $\alpha\beta\leq 1$, then $h$ has two real roots: $\frac{(2-\alpha\beta)}{\beta^2} \pm \frac{2\sqrt{1-\alpha\beta}}{\beta^2}$, which are equal to $\lmin$ and $\lmax$ since $X^2\pm2X+1 = (X\pm1)^2$.
	\end{proof}
	%\qed
	\begin{proof}[Proof of Lemma~\ref{lem::lambdappositive}]
	Assume that $\lambda_p>0$, if $\Delta_{M_p}<0$, then $ 2\Re(\sigma_{p,-}) = 2\Re(\sigma_{p,+}) = -(\alpha+\beta\lambda_p)<0$. If $\Delta_{M_p}\geq 0$ then $\sigma_{p,-}$ and $\sigma_{p,+}$ are real. Remark that $\sigma_{p,-}\sigma_{p,+}=\lambda_p$ so the eigenvalues have the same sign, and $\sigma_{p,-}+\sigma_{p,+}= -(\alpha+\beta\lambda_p)<0,$ so they are negative.
	\end{proof}
	%\qed
	\begin{proof}[Proof of Theorem~\ref{thm:INNAdiffeo}]
		Let $(\theta,\psi)\in\R^P\times\R^P$,
		to prove that $G$ is a local diffeomorphism we prove that $DG(\theta,\psi)$ is invertible and then use the local inversion theorem. Using again the block transformation of $DG(\theta,\psi)$, $\det(DG(\theta,\psi)) = \prod_{p=1}^P \det(M_p)$, where
		\begin{equation}\label{eq::detgamma}
			\det(M_p) = (1 - \gamma(\alpha-\ovb) -\gamma\beta\lambda_p)(1-\frac{\gamma}{\beta}) - \frac{\gamma}{\beta}\gamma(\alpha-\ovb) = 1 - \gamma(\alpha+\beta\lambda_p) + \gamma^2\lambda_p.
		\end{equation}
		We want $\gamma$ such that $\det(M_p)\neq 0$ for any $(\theta,\psi)\in\R^P\times\R^P$, hence for any $\lambda_p\in[-L,L]$ (using Assumption~\ref{ass::gradLipschitz}).
		First, if $\lambda_p=0$, from \eqref{eq::detgamma}, we must take $\gamma\neq 1/\alpha$. Now let $\lambda_p\neq 0$, then \eqref{eq::detgamma} is a second-order polynomial in $\gamma$ with discriminant $ (\alpha+\beta\lambda_p)^2 - 4\lambda_p= \Delta_{M_p}$ already studied Section~\ref{sec::stabmanif} and Lemma~\ref{lem::signofDelta}.
		If $\Delta_{M_p}<0$, then $\det(M_p)$ has no real roots and the choice of $\gamma$ is free. Assume now that $\Delta_{M_p}\geq 0$, there exists two real roots to \eqref{eq::detgamma}:
		\begin{equation}\label{eq::gammaRoots}
				\gamma^{+} =  \frac{(\alpha+\beta\lambda_p)}{2\lambda_p} +  \frac{\sqrt{(\alpha + \beta \lambda_p)^2 - 4\lambda_p}}{2 \lambda_p}
				\ \text{and}\
				\gamma^{-} =  \frac{(\alpha+\beta\lambda_p)}{2\lambda_p} -  \frac{\sqrt{(\alpha + \beta \lambda_p)^2 - 4\lambda_p}}{2 \lambda_p}.
		\end{equation}
		Remark that when $\lambda_p<0$, $\gamma^+<0$ and when $\lambda_p>0$, $0<\gamma^-<\gamma^+$, so in every case we only need to ensure $0<\gamma<\gamma^-$, for every $\lambda_p\in[-L,L]$. So, for every $\lambda\in\R$ for which it is well defined, consider the function $\gamma^{-}(\lambda)=\frac{(\alpha+\beta\lambda)}{2\lambda} -  \frac{\sqrt{(\alpha + \beta \lambda)^2 - 4\lambda}}{2 \lambda}$. When defined, its derivative is $ -\frac{\alpha\sqrt{\left(\alpha+\beta \lambda\right)^2-4\lambda}+\left(2-\alpha\beta\right)\lambda-\alpha^2}{2\lambda^2\sqrt{\left(\alpha+\beta \lambda\right)^2-4\lambda}}$. The denominator is always positive so we study the numerator: $h(\lambda)=-\alpha\sqrt{(\alpha+\beta \lambda)^2-4\lambda}-(2-\alpha\beta)\lambda+\alpha^2 $, and we differentiate it:
		\begin{equation*}
			h'(\lambda)  = -\frac{\alpha(2\beta(\alpha+\beta \lambda)-4)}{2\sqrt{(\alpha+\beta \lambda)^2-4\lambda}}+\alpha\beta-2,
			\quad \text{and}\quad
			h''(\lambda)  = -\frac{4\alpha(\alpha\beta-1)}{((\alpha+\beta \lambda)^2-4\lambda)^{\frac{3}{2}}}.
		\end{equation*}
		This allows deducing the minimal value of $\gamma^-(\lambda)$ in each setting by constructing the tables of variations displayed in Figure~\ref{tab::var}. There, it follows from standard computations that $h'(0)=h(0)=0$, $h(\lmax)\leq 0$ and $\lim_{\lambda\to +\infty} h'(\lambda)=-2$ (when $\alpha\beta\leq 1$), and via L'Hôpital's rule we obtained $\lim_{\lambda\to 0}\gamma^-(\lambda)=1/\alpha$.
		We deduce from the tables that $G$ is a local diffeomorphism if $\gamma<\gamma^-(L)$ when $\alpha\beta>1$ and if $\gamma<\min(\gamma^-(L),\gamma^-(-L))$ when $\alpha\beta\leq 1$ and $L\notin [\lmin,\lmax]$.
		Remark that the condition $\gamma\neq \frac{1}{\alpha}$ is implied in both cases. This proves the theorem.
		\begin{figure}[t]
			\centering
		\begin{tabular}{ c c }
			\tiny If $\alpha\beta>1$ & \tiny  If $\alpha\beta\leq1$\\
			\resizebox{0.35\linewidth}{0.17\linewidth}{%
				\begin{tikzpicture}
					\tkzTabInit[lgt=1.2]{$\lambda$ /1,$h''(\lambda)$ /1,$h'(\lambda)$ /1, $h(\lambda)$/1, $\gamma^-(\lambda)$/1}
					{$-\infty$,$0$,$+\infty$}
					% for h''
					\tkzTabLine{,-,t,-,}
					%for h'
					\tkzTabVar{+/,
						R/, %Put nothing, $0$ will be place there
						-/$-\infty$}
					\tkzTabIma{1}{3}{2}{$0$} %breaks arrow and put $0$ there
					%for h
					\tkzTabVar{-/,
						+/$0$,
						-/}
					%for gamma-
					\draw[double style](N24) --(N25); %draw vertical bar
					\tkzTabVar{+/,
						R/,
						-/}
					\tkzTabIma{1}{3}{2}{$\frac{1}{\alpha}$} %breaks arrow and put smthing there
				\end{tikzpicture}
			}
			&
			\resizebox{0.55\linewidth}{0.17\linewidth}{%
				\begin{tikzpicture}
					\tkzTabInit[lgt=1.2]{$\lambda$ /1,$h''(\lambda)$ /1,$h'(\lambda)$ /1, $h(\lambda)$/1,$\gamma^-(\lambda)$/1}
					{$-\infty$,$0$,$\lmin$,$\lmax$,$+\infty$}
					%for h''
					\tkzTabLine{,+,t,+,d,h,d,+,}
					%for h'
					\tkzTabVar{-/,
						R/, %Put nothing, $0$ will be place there
						+DH/,
						D-/,
						+/-2
					}
					\tkzTabIma{1}{3}{2}{$0$} %breaks arrow and put $0$ there
					%for h
					\tkzTabVar{+/,
						-/$0$,
						+DH/,
						+C/$h(\lmax)\leq 0$,
						-/
					}
					%for gamma-
					\draw[double style](N24) --(N25); %draw vertical bar
					\tkzTabVar{-/,
						R/, %Put nothing, $0$ will be place there
						+DH/,
						D+/,
						-/}
					\tkzTabIma{1}{3}{2}{$\frac{1}{\alpha}$} %breaks arrow and put smthing there
				\end{tikzpicture}
			}
		\end{tabular}
		\caption{Tables of variations for the proof of Theorem~\ref{thm:INNAdiffeo}. The sign of $h''$ allows deducing the variations and signs of $h'$ and $h$ which themselves allow deducing the minima of $\gamma^-$.\label{tab::var}}
		\end{figure}
	\end{proof}
		%\qed

	\section{Proof of Convergence of INNA}\label{sec::AppCV}
	To prove Theorem~\ref{thm::convergenceofINNA}, we will use the following lemma.
	\begin{lemma}[{\cite{avsic1970limit}}]\label{lem::lange}
		If a bounded sequence $(u_k)_{k\in\N}$ in $\R^P$ satisfies,
		$
			\lim\limits_{k\to+\infty} \Vert u_{k+1} - u_k\Vert =0,
		$
		then the set of accumulation points of $(u_k)_{k\in\N}$ is connected. If this set is finite then it reduces to a singleton and $(u_k)_{k\in\N}$ converges.
	\end{lemma}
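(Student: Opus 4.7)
Proof proposal. Let $A$ denote the set of accumulation points of $(u_k)_{k\in\N}$. Since the sequence is bounded, the Bolzano--Weierstrass theorem guarantees that $A$ is nonempty, and a standard diagonal argument shows that $A$ is closed (as an intersection $\bigcap_{N}\overline{\{u_k : k\geq N\}}$); combined with boundedness, $A$ is compact. The goal then splits into two pieces: proving $A$ is connected, and deducing that finiteness of $A$ forces $|A|=1$ and hence convergence.

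For the connectedness, I will argue by contradiction in the usual Ostrowski style. Assume $A = A_1 \sqcup A_2$ with $A_1, A_2$ nonempty, closed, and disjoint. Since $A$ is compact, so are $A_1$ and $A_2$, and therefore $\delta := \mathrm{dist}(A_1, A_2) > 0$. Introduce the open neighborhoods $U_i = \{x \in \R^P : \mathrm{dist}(x, A_i) < \delta/4\}$ for $i=1,2$, which are disjoint. Because $\|u_{k+1}-u_k\| \to 0$, there exists $k_0$ such that $\|u_{k+1}-u_k\| < \delta/4$ for all $k \geq k_0$; thus no iterate past $k_0$ can jump from $U_1$ to $U_2$ in a single step without lying in the complement $W := \R^P \setminus (U_1 \cup U_2)$. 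Since both $A_1$ and $A_2$ contain accumulation points, the tail of the sequence visits $U_1$ infinitely often and $U_2$ infinitely often, so by the no-jump property it must also visit $W$ infinitely often. Extracting a convergent subsequence from these visits (possible by boundedness) yields an accumulation point in the closed set $\overline{W}$, which is disjoint from $A_1 \cup A_2 = A$, contradicting the definition of $A$. Hence $A$ is connected.

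For the second claim, observe that a finite connected subset of $\R^P$ (with the Euclidean topology) must be a singleton: any finite set with at least two points can be split by two disjoint open balls of sufficiently small radius, so it is disconnected. Therefore, if $A$ is finite, $A = \{u^\star\}$ for some $u^\star \in \R^P$. Finally, a bounded sequence in $\R^P$ with a unique accumulation point converges to that point: if not, there would exist $\varepsilon > 0$ and a subsequence $(u_{k_j})$ with $\|u_{k_j}-u^\star\|\geq \varepsilon$, which, being bounded, would admit a further convergent subsequence whose limit lies in $A \setminus\{u^\star\}$, a contradiction.

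The main obstacle is the connectedness step; everything else is essentially topological bookkeeping. The key quantitative input is the vanishing-increment hypothesis, which is precisely what rules out the sequence ``tunneling'' between two disjoint closed components of $A$, and the standard pitfall to avoid is ensuring that the intermediate visits to $W$ actually produce an accumulation point (this is where compactness of the ambient closed ball containing the sequence is used).
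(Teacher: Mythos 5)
Your proof is correct. Note that the paper does not prove this lemma at all---it simply cites Proposition~12.4.1 of \citet{lange2013optimization}---so there is no proof in the paper to compare against. The argument you give is the standard Ostrowski-style one: compactness of $A$ and of each piece $A_1,A_2$ of a hypothetical separation gives $\delta=\mathrm{dist}(A_1,A_2)>0$; the vanishing-increment hypothesis then forbids the tail from crossing between the $\delta/4$-neighborhoods $U_1,U_2$ in a single step, so it must visit the closed set $W=\R^P\setminus(U_1\cup U_2)$ infinitely often, producing by Bolzano--Weierstrass an accumulation point outside $A$, a contradiction. The reduction of a finite connected set to a singleton, and the convergence of a bounded sequence with a unique accumulation point, are both handled correctly as standard topological facts. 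One small presentational point: the step ``the sequence must also visit $W$ infinitely often'' deserves the one-line justification you give implicitly but not explicitly---namely that between a late visit to $U_1$ and a later visit to $U_2$, the first exit from $U_1$ cannot land in $U_2$ (since that would require a step of length at least $\delta/2>\delta/4$) and hence lands in $W$. With that sentence made explicit, the proof is complete and matches what one would find in the cited reference.
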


	\begin{proof}[Proof of Theorem~\ref{thm::MainResINNA}]
		Assume that Assumption~\ref{ass::gradLipschitz} holds and $\alpha>0$. Let $(\theta_0,\psi_0)\in\R^P\times\R^P$, and let $\gamma>0$ such that \eqref{eq::condgammaCV2} holds.
		Let $(\theta_k,\psi_k)_{k\in\N}$ be the sequence generated by INNA initialized at $(\theta_0,\psi_0)$. We first show that the sequence $(\E_k)_{k\in\N}$ defined $\forall k\in\N$ by
		\begin{equation}\label{eq::Ek}
			\E_k = (1+\alpha\beta-\gamma\alpha)\J(\theta_k) + \frac{1}{2}\Vert(\alpha-\frac{1}{\beta})\theta_k  +\frac{1}{\beta}\psi_k\Vert^2
		\end{equation}
		converges. The sequence $(\E_k)_{k\in\N}$ represents an ``energy'' that decreases along the iterations, where the first and second terms in \eqref{eq::Ek} represent ``potential'' and ``kinetic'' energies respectively. This sequence resembles the Lyapunov function of DIN \citep{alvarez2002second,castera2019inertial} but is more involved to derive, as often for algorithms compared to ODEs.
		We use the notations $a=\alpha-1/\beta$, $b=1/\beta$, $\DT_k =\theta_{k+1}-\theta_k$ and $\DP_k =\psi_{k+1}-\psi_k$, for $k\in\N$, so that INNA is rewritten as:
		\begin{equation}\label{eq::SimpINNA}
			\begin{cases}
				\DP_k &=  -\gamma a\theta_k  -\gamma b\psi_k\\
				\DT_k &= \DP_k -\gamma\beta \nabla\J(\theta_k)
			\end{cases}.
		\end{equation}
		Also denote $\mu = 1+\alpha\beta-\gamma\alpha$, where $\mu>0$ since $\gamma < 1/\alpha + \beta$.
		Let $k\in\N$, we will prove $\E_{k+1}-\E_{k}\leq 0$. From Assumption~\ref{ass::gradLipschitz} follows a \emph{descent lemma} (see \cite[Proposition A.24]{bertsekas1998nonlinear}):
		\begin{equation*}
			\mu\J(\theta_{k+1}) - \mu\J(\theta_k) \leq \mu\langle \nabla\J(\theta_k) , \DT_k \rangle + \frac{\mu L}{2} \Vert\DT_k\Vert^2,
		\end{equation*}
		which according to \eqref{eq::SimpINNA}, can equivalently be rewritten as,
		\begin{equation}\label{eq::descentlemma2}
			\mu\J(\theta_{k+1}) - \mu\J(\theta_k) \leq -\mu\langle \frac{\DT_k-\DP_k}{\gamma\beta} , \DT_k \rangle + \frac{\mu L}{2} \Vert\DT_k\Vert^2.
		\end{equation}
		We save this for later and now turn our attention to the other term in $\E_{k+1}-\E_k$,
		\begin{equation*}
			\frac{1}{2}\Vert a\theta_{k+1} + b\psi_{k+1}\Vert^2 - \frac{1}{2}\Vert a\theta_{k} + b\psi_{k}\Vert^2 =  \frac{1}{2}\Vert a\theta_{k} + a\DT_k +  b\psi_{k}+b\DP_k\Vert^2 - \frac{1}{2}\Vert a\theta_{k} + b\psi_{k}\Vert^2  .
		\end{equation*}
		Expanding this and using the fact that $ a\theta_{k} + b\psi_{k} = -\DP_k/\gamma$, we can show that,
		\begin{align}\label{eq::secondterm}
			\begin{split}
				&\frac{1}{2}\Vert a\theta_{k} + a\DT_k +  b\psi_{k}+b\DP_k\Vert^2 - \frac{1}{2}\Vert a\theta_{k} + b\psi_{k}\Vert^2
				\\=& \frac{a^2}{2}\Vert \DT_k\Vert^2 + \frac{b^2}{2} \Vert\DP_k\Vert^2 + ab\langle\DT_k,\DP_k\rangle -\frac{a}{\gamma}\langle\DT_k,\DP_k\rangle  -\frac{b}{\gamma}\Vert\DP_k\Vert^2.
			\end{split}
		\end{align}
		We then use $\Vert\DP_k\Vert^2 = \Vert \DT_k - \DP_k\Vert^2 + \Vert\DT_k\Vert^2 -2\langle\DT_k,\DT_k-\DP_k\rangle$ and $\langle\DT_k,\DP_k\rangle = \Vert\DT_k\Vert^2 -\langle\DT_k,\DT_k-\DP_k\rangle$ in
		\eqref{eq::secondterm} to obtain:
		\begin{multline}\label{eq::secondterm3}
			\frac{1}{2}\Vert a\theta_{k+1} + b\psi_{k+1}\Vert^2 - \frac{1}{2}\Vert a\theta_{k} + b\psi_{k}\Vert^2
			= \left(\frac{a^2}{2}+\frac{b^2}{2}+ ab - \frac{a}{\gamma} - \frac{b}{\gamma}\right)\Vert \DT_k\Vert^2
			\\ + \left(\frac{b^2}{2}-\frac{b}{\gamma}\right)\Vert\DT_k-\DP_k\Vert^2 + \left(-b^2-ab+\frac{a}{\gamma}+\frac{2b}{\gamma}\right)\langle\DT_k,\DT_k-\DP_k\rangle.
		\end{multline}
		We then simplify the factors using the identity $a+b=\alpha$, as well as  $\frac{a^2}{2}+\frac{b^2}{2}+ ab = \frac{1}{2}(a+b)^2 = \frac{\alpha^2}{2}$, and $-b^2-ab = -\alpha/\beta $  to deduce that \eqref{eq::secondterm3} is equal to
		\begin{multline}\label{eq::secondterm4}
			\left(\frac{\alpha^2}{2} - \frac{\alpha}{\gamma}\right)\Vert \DT_k\Vert^2
			+\frac{\gamma-2\beta}{2\gamma\beta^2}\Vert\DT_k-\DP_k\Vert^2
			+\frac{-\gamma\alpha+\alpha\beta+1}{\gamma\beta}\langle\DT_k,\DT_k-\DP_k\rangle.
		\end{multline}
		We can finally combine \eqref{eq::descentlemma2} and \eqref{eq::secondterm4},
		\begin{align}\label{eq::diffEk}
			\begin{split}
				\E_{k+1}-\E_k \leq& \left(\frac{\mu L}{2}+\frac{\alpha^2}{2} - \frac{\alpha}{\gamma}\right)\Vert \DT_k\Vert^2
				+\frac{\gamma-2\beta}{2\gamma\beta^2}\Vert\DT_k-\DP_k\Vert^2
				\\&+ \left(-\frac{\mu}{\gamma\beta}+\frac{1+\alpha\beta-\gamma\alpha}{\gamma\beta}\right)\langle\DT_k,\DT_k-\DP_k\rangle.
			\end{split}
		\end{align}
		Notice that, $\mu = 1+\alpha\beta-\gamma\alpha$ is specifically chosen so that the last term in \eqref{eq::diffEk} vanishes, so,
		\begin{equation}\label{eq::diffEk2}
			\E_{k+1}-\E_k \leq \frac{\gamma\mu L+\gamma\alpha^2 -2\alpha}{2\gamma}\Vert \DT_k\Vert^2
			+\frac{\gamma-2\beta}{2\gamma\beta^2}\Vert\DT_k-\DP_k\Vert^2.
		\end{equation}
		To prove the decrease of $(\E_k)_{k\in\N}$, it remains to justify that both terms in \eqref{eq::diffEk2} are negative. First, the condition $\gamma<2\beta$ in \eqref{eq::condgammaCV2} makes the second term negative. Then,
		\begin{equation*}
			\gamma\mu L+\gamma\alpha^2 -2\alpha <0
			\iff   -\alpha L\gamma^2 + \left(\alpha^2+ (1+\alpha\beta)L\right)\gamma - 2\alpha <0.
		\end{equation*}
		A simpler sufficient condition for this to hold is $\left(\alpha^2+ (1+\alpha\beta)L\right)\gamma - 2\alpha <0$ or equivalently $\gamma< 2\alpha/\left(\alpha^2+ (1+\alpha\beta)L\right)$, which holds from \eqref{eq::condgammaCV2}.
		So the sequence $(\E_k)_{k\in\N}$ is a decreasing. It is also lower-bounded since $\J$ is lower-bounded, so it converges.

		The rest of the proof then relies on exploiting \eqref{eq::diffEk2}. Let $K\in\N$, we sum \eqref{eq::diffEk2}:
		\begin{equation*}
			\sum_{k=0}^{K} \E_{k+1} - \E_{k} \leq \frac{\gamma\mu L+\gamma\alpha^2 -2\alpha}{2\gamma} \sum_{k=0}^{K}  \Vert \DT_k\Vert^2
			+\frac{\gamma-2\beta}{2\gamma\beta^2}\sum_{k=0}^{K} \Vert\DT_k-\DP_k\Vert^2.
		\end{equation*}
		The left-hand side is a telescopic series, and it follows from \eqref{eq::SimpINNA} that $\forall k\in\N$, $\DT_k-\DP_k = -\gamma\beta\nabla\J(\theta_k)$, so denoting $C_1 = -(\gamma\mu L+\gamma\alpha^2 -2\alpha)/2\gamma>0$ and $C_2 = -(\gamma^2-2\gamma\beta)/2>0$,
		\begin{equation*}
			\E_{0}-\E_{K+1} \geq C_1 \sum_{k=0}^{K}  \Vert \DT_k\Vert^2
			C_2\sum_{k=0}^{K} \Vert\nabla\J(\theta_k)\Vert^2.
		\end{equation*}
		Then, $\E_{0} - \E_{K+1}$ is upper bounded since $(\E_k)_{k\in\N}$ converges,
		so $\sum_{k=0}^{K} \Vert \nabla\J(\theta_k)\Vert^2<+\infty$. This implies that $\lim_{k\to +\infty}\Vert \nabla\J(\theta_k)\Vert^2 =0$ and we deduce similarly that $\lim_{k\to +\infty}\Vert \theta_{k+1}-\theta_{k}\Vert^2 =0$.
		Using \eqref{eq::Inna}, we also have,
		\begin{equation}\label{eq::DPconverges}
			\Vert(\alpha-\frac{1}{\beta})\theta_k  +\frac{1}{\beta}\psi_k\Vert^2 = \frac{1}{\gamma^2}\Vert \psi_{k+1}-\psi_{k}\Vert^2\leq \frac{2}{\gamma^2}\Vert\theta_{k+1}-\theta_{k}\Vert^2  +2\beta^2 \Vert\nabla\J(\theta_k)\Vert^2 \xrightarrow[k\to\infty]{}0.
		\end{equation}
		The convergence of $(\E_k)_{k\in\N}$ and \eqref{eq::DPconverges} imply that $(\J(\theta_k))_{k\in\N}$ converges, which proves the first part of the theorem.
		Assume that the critical points are isolated and that the sequence $(\theta_k)_{k\in\N}$ is uniformly bounded on $\R^P$.
		According to Lemma~\ref{lem::lange}, since $(\theta_k)_{k\in\N}$ is bounded and $\lim_{k\to +\infty}\Vert \theta_{k+1}-\theta_{k}\Vert=0$, the set of accumulation points of $(\theta_k)_{k\in\N}$ is connected.
		By continuity of $\nabla \J$, accumulation points of $(\theta_k)_{k\in\N}$ are critical points of $\J$, which are assumed to be isolated. So the set of accumulation points is a singleton and $(\theta_k)_{k\in\N}$ converges to it.
	\end{proof}
	%\qed

	\FloatBarrier
	\bibliographystyle{plainnat}
	\bibliography{biblio.bib}

\end{document}